\numberwithin{equation}{section}
\def\Z{{\mathbb Z}}
\def\E{{\mathbb E}}
\def\R{{\mathbb R}}
\def\P{{\mathbb P}}
\def\1{{\bf 1}}
\def\a{{\mathbf a}}
\def\e{{\mathbf e}}
\def\n{{\mathbf n}}
\def\q{{\mathbf q}}
\def\pmod #1{\ ({\rm{mod}}\ #1)}
\def\cP{{\mathcal P}}
\def\cQ{{\mathcal Q}}
\def\cR{{\mathcal R}}
\def\cS{{\mathcal S}}
\def\cH{{\mathcal H}}
\def\cZ{{\mathcal Z}}
\def\cQ{{\mathcal Q}}
\def\cT{{\mathcal T}}
\def\cE{{\mathcal E}}
\theoremstyle{plain}
\newtheorem{theorem}{Theorem}
\newtheorem{lemma}{Lemma}
\newtheorem{proposition}{Proposition}
\theoremstyle{definition}
\theoremstyle{remark}
\renewcommand{\mod}{\bmod}
\begin{document}

\title
{On the gaps between consecutive primes  }

\author{Yu-Chen Sun}
\address {Medical School, Nanjing
University, Nanjing 210093, People's Republic of China}
\email{b111230069@smail.nju.edu.cn}
\author{Hao Pan}
\address {Department of Mathematics, Nanjing
University, Nanjing 210093, People's Republic of China}
\email{haopan79@zoho.com}

\keywords{gaps between primes; sieve method; probabilistic method; the least prime in an arithmetic progression}

\subjclass[2010]{Primary 	11N05; Secondary 	11N13, 11N36, 05D40}

 \begin{abstract}
Let $p_n$ denote the $n$-th prime. For any $m\geq 1$, there exist infinitely many $n$ such that $p_{n}-p_{n-m}\leq C_m$ for some large constant $C_m>0$,
and
$$p_{n+1}-p_n\geq
\frac{c_m\log n\log\log n\log\log\log\log n}{\log\log\log n},
$$
for some small constant $c_m>0$. Furthermore, we also obtain a related result concerning the least primes in arithmetic progressions.
\end{abstract}
\maketitle

\section{Introduction}
\setcounter{lemma}{0}
\setcounter{corollary}{0}
\setcounter{remark}{0}
\setcounter{equation}{0}
\setcounter{conjecture}{0}
\setcounter{proposition}{0}
In recent years, two breakthroughs have been made on the gaps between consecutive primes.
The first one is concerning the small gaps between primes. In 2012, with help of a refinement of the Bombieri-Vinogradov theorem for smooth moduli, Zhang proved that
\begin{equation}\label{Zhang}
\liminf_{n\to\infty}(p_{n+1}-p_n)\leq 7\times 10^7,
\end{equation}
where $p_n$ denotes the $n$-th prime. In 2013, using the multi-dimensional sieve method, Maynard improved the bound for $p_{n+1}-p_n$ to $600$. Furthermore, Maynard and Tao independently proved that for any $m\geq 1$,
\begin{equation}\label{MaynardTao}
\liminf_{n\to\infty}(p_{n+m}-p_n)\leq C_m,
\end{equation}
where $C_m>0$ is a constant only depending on $m$.

The other breakthrough is concerning the large gaps between consecutive primes. In \cite{FGKT16,Maynard16-gap,FGKMT17}, Ford, Green, Konyagin, Maynard and Tao proved that there exist infinitely many $n$ such that
\begin{equation}\label{FGKMTbound}
p_{n+1}-p_n\geq
\frac{c\log n\log_2 n\log_4 n}{\log_3 n},
\end{equation}
where $c>0$ is an absolute constant and $\log_k$ denotes the $k$-th iteration of the logarithum function. This improves the former Erd\H os-Rankin bound
\begin{equation}\label{ERbound}
p_{n+1}-p_n\geq
\frac{c\log n\log_2 n\log_4 n}{\log_3^2 n}.
\end{equation}

In \cite{P-new}, Pintz considered the combination of the large and small gaps between primes. He proved that for any $m\geq1$, there exist infinitely many $n$ such that
\begin{equation}
p_{n}-p_{n-m}\leq C_m
\end{equation}
and
\begin{equation}\label{Pintzlarge}
p_{n+1}-p_n\geq
\frac{c_m\log n\log_2 n\log_4 n}{\log_3^2 n},
\end{equation}
where $C_m,c_m>0$ are two constants only depending on $m$. However, (\ref{Pintzlarge}) still belongs to the Erd\H os-Rankin type bound, which is weaker than (\ref{FGKMTbound}). In this paper, we shall improve Pintz's bound for the large gaps between primes.
\begin{theorem}\label{main}
For any $m\geq 1$, there exist infinitely many $n$ such that
\begin{equation}
p_{n}-p_{n-m}\leq C_m
\end{equation}
and
\begin{equation}
p_{n+1}-p_n\geq
\frac{c_m\log n\log_2 n\log_4 n}{\log_3 n},
\end{equation}
where $C_m,c_m>0$ are two constants only depending on $m$.
\end{theorem}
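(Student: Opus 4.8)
\emph{Plan of proof.} Following the scheme of Pintz \cite{P-new}, the idea is to splice a Maynard--Tao prime cluster into the large-gap construction of \cite{FGKT16,Maynard16-gap,FGKMT17}, using the latter in place of the Erd\H os--Rankin construction that Pintz employed. The structural fact that makes this work is that, in the FGKMT construction, the modulus $W$ of the relevant residue class satisfies $\log W\asymp x$ (all primes used being $O(x)$), where $x$ is the largest prime used, while the prime-free interval it produces has the strictly larger length $Y\asymp\frac{x\log x\log_3 x}{\log_2 x}\asymp\frac{\log W\,\log_2 W\,\log_4 W}{\log_3 W}$; since $Y/\log W\to\infty$, one is free to look for that residue class among integers in a range $[N,2N]$ with $N$ a fixed power of $W$, which is exactly the flexibility a Maynard--Tao argument needs. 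Fix $m\ge1$. By the Maynard--Tao theorem underlying (\ref{MaynardTao}), in its quantitative form (valid for any fixed positive level of distribution, with $k$ then depending on $m$ and that level), there is $k=k(m)$ such that every admissible $k$-tuple contains at least $m+1$ primes for a positive proportion of integer translates; fix such an admissible tuple $\mathcal H=\{h_1<\dots<h_k\}$ with $h_1=0$, and set $C_m:=h_k$.

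\emph{The large-gap construction with forbidden residues.} For each large $x$ the construction of \cite{FGKT16,Maynard16-gap,FGKMT17} produces a residue class $a_0\pmod W$, $W=\prod_{p\le x}p$, with $(v,v+Y]$ prime-free for every $v\equiv a_0\pmod W$ and $Y\gg\frac{x\log x\log_3 x}{\log_2 x}$. The first task is to check that one may additionally forbid, for each prime $p\le x$, a prescribed set of at most $k$ residue classes for $v$ — namely the classes that would make some $v+h_i-h_k$ divisible by $p$. At every stage of that construction — the covering of the interval by the small primes, the efficient covering by the medium primes via a Maynard--Tao-type argument, and the probabilistic cleanup by the remaining primes — the residue class attached to a given prime $p$ enters only through its density $1/p$; deleting at most $k$ of the $p$ available classes perturbs each of the Mertens products, singular series and covering estimates in the argument by a factor $1+O_m(1/p)$, and, crucially, these perturbations do not accumulate, since in each estimate a density-$1/p$ class is merely replaced by another density-$1/p$ class. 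Hence, at the cost of at most a constant factor in $Y$, we may take $a_0$ to have this property. Writing $a$ for the residue with $a\equiv a_0-h_k\pmod W$, we conclude that for every $M\equiv a\pmod W$ the interval $(M+h_k,\,M+h_k+Y]$ is prime-free, and $M+h_1,\dots,M+h_k$ are all coprime to $W$.

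\emph{Inserting the cluster.} Put $N:=W^{3}$, so $W=N^{1/3}$ is comfortably inside the Bombieri--Vinogradov range; the Maynard--Tao sieve may then be run over $M\in[N,2N]$ restricted to $M\equiv a\pmod W$, with the tuple $\mathcal H$, which is nondegenerate modulo $W$ by the previous paragraph. This yields $\gg N/\big(W(\log N)^{O_k(1)}\big)\to\infty$ integers $M$ with at least $m+1$ of $M+h_1,\dots,M+h_k$ prime. Fix such an $M$ and let $p_n$ be the largest prime not exceeding $M+h_k$. Since $[M,M+h_k]$ then contains at least $m+1$ primes, the largest $m+1$ primes up to $M+h_k$, namely $p_{n-m},\dots,p_{n-1},p_n$, all lie in $[M,M+h_k]$, whence $p_n-p_{n-m}\le h_k=C_m$; and since $(M+h_k,M+h_k+Y]$ is prime-free we have $p_{n+1}>M+h_k+Y$, hence $p_{n+1}-p_n>Y$. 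Finally $p_n\asymp N=W^{3}$ gives $\log n\asymp\log p_n\asymp 3x$, so
\[
p_{n+1}-p_n\;>\;Y\;\gg\;\frac{x\log x\log_3 x}{\log_2 x}\;\asymp\;\frac{\log n\,\log_2 n\,\log_4 n}{\log_3 n},
\]
with implied constant depending only on $m$. Letting $x\to\infty$ produces infinitely many such $n$, proving the theorem with suitable $C_m,c_m$ depending only on $m$.

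\emph{Where the difficulty lies.} The one step requiring real care is the insertion of the forbidden residues into the FGKMT construction: at first sight this looks dangerous because $\sum_p k/p$ diverges, but the point is that each restriction only passes from one density-$1/p$ class to another, so every sieve-theoretic quantity in the construction changes by convergent local factors and the global bounds survive up to a constant. (The finitely many very small primes, where the construction naturally uses the zero class, are handled using the admissibility of $\mathcal H$, which supplies for each such $p$ a class missed by all the $h_i$.) Granting this, the rest is a routine marriage of the two cited results, the only loss — absorbed into the constant $c_m$, and the reason $C_m,c_m$ may be taken to depend on $m$ alone — being a bounded factor.
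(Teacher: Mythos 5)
Your overall strategy (splice a Maynard--Tao cluster into the Ford--Green--Konyagin--Maynard--Tao sieve) is the same as the paper's, but the two steps you flag as "routine" are exactly where your argument has genuine gaps, and the paper resolves them by devices your proposal does not contain. First, the forbidden-residue modification. You place the cluster at bounded offsets $h_1,\dots,h_k=O(1)$ and must therefore forbid, for \emph{every} prime $p\leq x$, the $k$ classes occupied by the points $M+h_i$. Your justification --- that one merely "passes from one density-$1/p$ class to another" --- is not valid for the primes at which the construction uses the zero class, namely all $p\leq\log^{20}x$ and all $p\in(\alpha,x/2]$. For those primes the zero class is used \emph{structurally}, not just for its density: the whole point is that an unsieved composite survivor has no prime factor in $[1,\log^{20}x]\cup(\alpha,x/2]$, so that the non-prime survivors are $\alpha$-smooth and de Bruijn's bound applies. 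Replacing the zero class by another class of the same density destroys this characterization, and the set of primes affected is unbounded (every tiny and medium prime dividing some $M+h_i$). One can repair this by dropping or shifting the $O(\log x)$ offending primes and showing the extra survivors are $o(x/\log x)$, but that is a different argument from the one you give. The paper avoids the problem entirely: it first runs Maynard--Tao once on $[x,2x]$ to produce an admissible tuple $\cH'=\{h_1',\dots,h_{k_m}'\}$ consisting of \emph{primes of size $\asymp x$}, and uses these as the cluster points \emph{inside} the sieved interval $(x,y]$. Since each $h_i'$ is a prime exceeding $x/2$, no prime $p\leq x/2$ divides it, so the zero classes automatically avoid the cluster; the avoidance condition only costs something for $s\in\cS$ (where $\sum_{s>\log^{20}x}k/s$ converges) and for $p\in\cP$ and $p\in(x,C_0x]$ (finitely many classes out of $\asymp x$). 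This preliminary Maynard--Tao step is the key idea your proposal is missing.

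Second, the insertion of the cluster into the progression $M\equiv a\pmod W$ with $W=N^{1/3}$ is not justified by "being in the Bombieri--Vinogradov range." Bombieri--Vinogradov is an average over all moduli up to $N^{1/2-\epsilon}$; it says nothing about the distribution of primes in a \emph{single} residue class modulo one fixed modulus of size $N^{1/3}$, and a Landau--Page exceptional character to a divisor of $W$ could genuinely deplete the class $a\pmod W$ of primes. The paper handles this with the Bank--Freiberg--Maynard machinery (Lemma \ref{MLP} and Lemma \ref{MT}): it takes $W=M=N^{\epsilon_m(1+o(1))}$, a \emph{small} power of $N$, and deletes the possible exceptional prime $\cZ_{N^{4\epsilon}}$ from the modulus (which is why $\cZ$ is carried through the entire sieve construction in Sections 3--5). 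Your proposal neither restricts the size of $W$ appropriately nor addresses the exceptional zero, so the step "the Maynard--Tao sieve may then be run over $M\in[N,2N]$ restricted to $M\equiv a\pmod W$" does not follow from the tools you invoke.
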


Let us introduce some necessary notations which will be used frequently later. 
\begin{itemize}
\item
For two quantities $X$ and $Y$, let $X=O(Y)$ or $X\ll Y$ mean that $|X|\leq C|Y|$ for some constant $C>0$. In particular, write $X\asymp Y$ provided that $X=O(Y)$ and $Y=O(X)$ simultaneously.

\item
Suppose that $X$ and $Y$ be  two quantities depending on $x$. Write $X=o(Y)$ provided that $X/Y$ tends to $0$ as $x\to\infty$. In particular, $X\sim Y$ means that $X=Y+o(Y)$ as $x\to\infty$.

\item For an event $A$, let $\P[A]$ denote the probability that $A$ happens. And for another event $B$, define the conditional probability 
$$
\P[A|B]:=\frac{\P[A\wedge B]}{\P[B]}.
$$

\item  For a random variable $X$, let $\E[X]$ the expectation with respect to $X$, i.e.,
$$
\E[X]=\sum_{x}x\cdot P[X=x].
$$
Furthermore, for an event $B$, define the conditional expectation
$$
\E[X|B]:=\sum_{x}x\cdot P\big[(X=x)\wedge B\big].
$$

\item For an assertion $A$, let 
$\1_A=1$ or $0$ according to whether $A$ is true or not.

\end{itemize}

This paper will be organized as follows. First, in the second section, we need to construct a suitable admissible set in order to apply a modified Maynard-Tao theorem. In Section 3, with help of the sieve method, we shall reduce the existence of large gap to the existence of some random vectors. And in the subsequent two sections, we shall complete the proof of Theorem \ref{main} via some probabilistic arguments. Finally, in the last section,  using the similar way, we shall prove a result concerning the lower bound of $p(k,l)$, where $p(k,l)$ denotes the least prime in the arithmetic progression $\{kn+l:\,n\geq 1\}$.

\section{Construction of the admissible set}
\setcounter{lemma}{0}
\setcounter{corollary}{0}
\setcounter{remark}{0}
\setcounter{equation}{0}
\setcounter{conjecture}{0}
\setcounter{proposition}{0}

The following  modified Landau-Page theorem is due to Bank, Freiberg and Maynard.
\begin{lemma}[{\cite[Lemma 4.1]{BFM16}}]\label{MLP}
Suppose that $T\geq 3$ and $P\geq T^{\frac1{\log_2T}}$. Among all $q\leq T$ with $P^+(q)\leq P$ and all primitive characters $\chi$ modulo $q$, there exist at most one $q_0$ and one $\chi$ modulo $q_0$ such that $L(s,\chi)$ has a zero in the region
\begin{equation}\label{sgzero1}
\Re(s)>1-\frac{c_0}{\log P}, \qquad  |\Im(s)|\leq\exp\bigg(\frac{\log P}{\sqrt{\log T}}\bigg),
\end{equation}
where $c_0>0$ is an absolute constant. If such a zero of $L(s,\chi)$ in the region (\ref{sgzero1}) exists, then the zero must be unique, real and simple.
Furthermore, the character $\chi$ is real and
\begin{equation}\label{sgzero2}
P^+(q_0)\gg \log q_0 \gg \log_2T.
\end{equation}
\end{lemma}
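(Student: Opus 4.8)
\emph{Proof plan.} This is Lemma~4.1 of \cite{BFM16}, and the plan is to reconstruct it from the classical Landau--Page theory together with the one genuinely new input, which is the way the smoothness hypothesis $P^+(q)\le P$ is exploited. I would split it into three ingredients: a de la Vall\'ee Poussin zero-free region, Landau's repulsion principle (``at most one exceptional modulus''), and a structural analysis of the exceptional modulus $q_0$.

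First, the classical part. For a fixed primitive character $\chi\pmod{q}$ with $q\le T$, I would run the de la Vall\'ee Poussin argument: from $3+4\cos\theta+\cos2\theta\ge0$ applied to $\log\bigl|\zeta(\sigma)^3L(\sigma+it,\chi)^4L(\sigma+2it,\chi^2)\bigr|$, comparing the pole of $\zeta$ at $s=1$ with a hypothetical zero $\beta+i\gamma$ of $L(s,\chi)$ and bounding $-\Re(L'/L)$ by a Hadamard / Borel--Carath\'eodory estimate in a disc of radius $\asymp1/\log\mathfrak q$ (with $\mathfrak q\asymp q(|\gamma|+2)$ the analytic conductor), one obtains $1-\beta\gg1/\log\mathfrak q$ except when $\gamma=0$, $\chi$ is real, and the zero is real; simplicity follows because a double real zero would contribute twice to the zero-sum and be excluded the same way, and the reduction to real $\chi$ follows by running the argument with $\chi$ and $\bar\chi$ (equivalently, using that $\zeta(s)L(s,\chi)L(s,\bar\chi)L(s,\chi^2)L(s,\bar\chi^2)$ has nonnegative Dirichlet coefficients). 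Uniqueness of the exceptional modulus is Landau's repulsion argument: if two distinct real primitive characters $\chi_1\pmod{q_1}$, $\chi_2\pmod{q_2}$ each had a real zero near $1$, then $F(s)=\zeta(s)L(s,\chi_1)L(s,\chi_2)L(s,\chi_1\chi_2)$ has nonnegative Dirichlet coefficients, constant term $1$, a simple pole at $s=1$, and at least two real zeros just to its left, and evaluating the resulting inequality at a real $\sigma$ slightly larger than $1$ forces $\min_i(1-\beta_i)\gg1/\log(q_1q_2)$, a contradiction.

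The step that needs the smoothness hypothesis, and which I expect to be the main obstacle, is to upgrade the above so that the width of the zero-free region is governed by $\log P$ rather than by $\log q$, uniformly over all $P$-smooth $q\le T$ and for $|\Im(s)|$ as large as $\exp(\log P/\sqrt{\log T})$; in the regime where $P$ is close to $T^{1/\log_2T}$ this is a genuine strengthening of the textbook region $1-c/\log q$, and I would expect it to require the multiplicative structure and sparsity of $P$-smooth moduli to be used substantively in controlling the density of zeros of $L(s,\chi)$ and the size of $L'/L$ — which is precisely what is carried out in \cite{BFM16}, and what I would invoke as a black box.

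Finally, the structural bounds $P^+(q_0)\gg\log q_0\gg\log_2T$. Since the exceptional character is real it is the Kronecker symbol attached to a fundamental discriminant, so $q_0$ is squarefree up to a bounded power of $2$; writing $q_0$ as a product of distinct primes and applying Chebyshev's estimate $\sum_{p\le y}\log p\ll y$ yields $\log q_0\ll P^+(q_0)$, which is the first inequality. For the lower bound on $q_0$, I would combine the existence of the zero, i.e.\ $1-\beta<c_0/\log P$, with an effective Landau-type lower bound $1-\beta\gg q_0^{-1/2}(\log q_0)^{-2}$ and with $\log P\ge\log T/\log_2T$; rearranging forces $q_0$ to be at least a fixed positive power of $\log T/\log_2T$, whence $\log q_0\gg\log_2T$.
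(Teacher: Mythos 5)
The paper offers no proof of this lemma at all: it is quoted verbatim from \cite[Lemma 4.1]{BFM16} and used as a black box. Your proposal is therefore consistent with the paper's treatment, and the parts you do reconstruct are essentially correct: the de la Vall\'ee Poussin $3$-$4$-$1$ argument for excluding complex zeros and forcing the exceptional zero to be real, simple, and attached to a real character; Landau's repulsion via $\zeta(s)L(s,\chi_1)L(s,\chi_2)L(s,\chi_1\chi_2)$ for uniqueness of the exceptional modulus; the bound $\log q_0\ll P^+(q_0)$ from the fact that the conductor of a real primitive character is squarefree up to a bounded power of $2$ together with Chebyshev; and the bound $\log q_0\gg\log_2 T$ from combining $1-\beta<c_0/\log P\le c_0\log_2 T/\log T$ with the effective lower bound $1-\beta\gg q_0^{-1/2}\log^{-2}q_0$. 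These are exactly the standard ingredients, and your computations for (\ref{sgzero2}) go through.

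You are also right to isolate the one step that cannot be recovered from textbook Landau--Page theory: the classical region has width $c/\log\big(q(|t|+2)\big)\gg c/\log T$ for $q\le T$ and $|t|\le\exp(\log P/\sqrt{\log T})$, whereas the lemma asserts a region of width $c_0/\log P$, which is wider by a factor of up to $\log_2 T$ when $P$ is near $T^{1/\log_2 T}$; likewise the repulsion argument as you set it up only yields $\min_i(1-\beta_i)\gg1/\log(q_1q_2)\gg1/\log T$, not $\gg1/\log P$. So the smoothness hypothesis $P^+(q)\le P$ must be used substantively, and your proposal does not supply that mechanism --- it defers it to \cite{BFM16}. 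As a self-contained proof this is a genuine gap, and it sits at the heart of the lemma rather than at its periphery; but since the present paper itself discharges the entire statement by citation to the same source, your treatment is, if anything, more detailed than the paper's. If you wanted to close the gap you would need to work through how \cite{BFM16} exploits the restriction to $P$-smooth moduli in both the zero-detection and the repulsion steps, rather than asserting that it can be done.
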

Fixed an absolute constant $c$ as in Lemma \ref{MLP} and define
\begin{equation}\label{sgzero3}
\cZ_T=P^+(q_0)
\end{equation}
if such a exceptional modulus $q_0$ exists, and $\cZ_T=1$ if no such modulus exists.

For some integers $h_1,\dots,h_k$, we say that $\cH=\{h_1,\dots,h_k\}$ is admissable, provided that for any prime $p$, there exists $1\leq a\leq p$ such that
$$
h_1,\dots,h_k\not\equiv a\pmod{p}.
$$
\begin{lemma}\label{MT}
Let $m$ be a positive integer. There is a large integer $k_m$ and
a small constant $\epsilon_m>0$ satisfying the following requirements. For any sufficiently large $N$, let $w=\epsilon_m\log N$ and
$$
W=\prod_{\substack{p\leq w\\p\nmid \cZ_{N^{4\epsilon}}}}p
$$
Let $\cH=\{h_1,\dots,h_{k_m}\}$ be an admissible $k_m$-tuple such that
$0\leq h_1,\dots,h_{k_m}\leq N$
and $h_j-h_i$ has no prime factor greater than $w$ for any $1\leq i<j\leq k_m$.
Then for any $1\leq b\leq W$ with $(b+h_1)\cdots(b+h_{k_m})$ and $W$ are co-prime, there exists
$n\in[N,2N]$ with $n\equiv b\pmod{W}$ such that
$
\{n+h_1,\ldots,n+h_{k_m}\}
$
contains at least $m$ primes.
\end{lemma}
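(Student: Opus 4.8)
The plan is to deduce this from the Maynard--Tao machinery in its strong quantitative form, as developed by Maynard and refined in the work of Banks--Freiberg--Maynard, the only new feature being the careful treatment of the modulus $W$ from which the exceptional prime factor of $\cZ_{N^{4\epsilon}}$ has been removed. First I would set up the standard sieve weights: for the admissible tuple $\cH=\{h_1,\dots,h_{k_m}\}$ and a smooth piece $R=N^{\theta}$ with $\theta$ a small fixed constant, one considers
$$
S(N,\rho):=\sum_{\substack{N\leq n<2N\\ n\equiv b\,(W)}}\Bigl(\sum_{i=1}^{k_m}\1_{n+h_i\text{ prime}}-\rho\Bigr)\Bigl(\sum_{\substack{d_i\mid n+h_i\\ \forall i}}\lambda_{d_1,\dots,d_{k_m}}\Bigr)^2,
$$
with the usual Maynard choice $\lambda_{d_1,\dots,d_{k_m}}\approx\mu(\prod d_i)F\bigl(\tfrac{\log d_1}{\log R},\dots\bigr)$ supported on $\prod d_i<R$ and $(\prod d_i,W)=1$. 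If one can show $S(N,\rho)>0$ for $\rho=m$, then some $n$ in the range with $n\equiv b\pmod W$ has at least $m+1>m$ of the $n+h_i$ prime, which is exactly the conclusion.

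The evaluation of $S(N,\rho)$ splits into the "main sum" $\sum_n\bigl(\sum_d\lambda_d\bigr)^2$, handled by a purely combinatorial computation over residues coprime to $W$, and the $k_m$ "prime sums" $\sum_n\1_{n+h_i\text{ prime}}\bigl(\sum_d\lambda_d\bigr)^2$, each of which requires an equidistribution estimate for primes in arithmetic progressions to moduli up to roughly $R^2W<N^{2\theta+\epsilon_m}$. Here is the one place the hypothesis on $W$ is used: the modulus $q$ in the relevant Bombieri--Vinogradov-type sum is $Wd$ with $d<R^2$; by construction $W$ is built only from primes $p\leq w=\epsilon_m\log N$ with $p\nmid\cZ_{N^{4\epsilon}}$, so $Wd$ has no prime factor that is the exceptional modulus $q_0$ associated to the range $T=N^{4\epsilon}$, and therefore the Siegel zero plays no role — one gets the expected asymptotic for $\sum_{n\equiv b\,(W)}\1_{n+h_i\text{ prime}}$ with a genuine power-saving (or at worst $\log$-power-saving) error, uniformly, via the Landau--Page analysis of Lemma 2.1 together with a standard large-sieve / Bombieri--Vinogradov argument on the remaining moduli. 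The smoothness condition that $h_j-h_i$ has no prime factor exceeding $w$ guarantees that the local factors at primes dividing some $h_j-h_i$ are absorbed into $W$, so the singular series in the main term factors cleanly and the ratio of the prime sum to the main sum converges, as $R\to\infty$, to the Maynard integral ratio
$$
M_{k_m}:=\sup_{F}\frac{\sum_{i=1}^{k_m}\int\cdots\int\bigl(\int F\,dt_i\bigr)^2\,dt_1\cdots\widehat{dt_i}\cdots dt_{k_m}}{\int\cdots\int F^2\,dt_1\cdots dt_{k_m}}.
$$

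Thus $S(N,m)/\bigl(\tfrac{N}{W\log N}\cdot(\text{main constant})\bigr)\to M_{k_m}-2m\cdot(1+o(1))$ up to the precise normalization, and since $M_{k_m}\gg\log k_m\to\infty$, choosing $k_m$ large enough that $M_{k_m}>2m$ (with a fixed $\epsilon_m$ and $\theta$ chosen small enough that the error terms are controlled for this $k_m$) forces $S(N,m)>0$ for all large $N$. I expect the main obstacle to be bookkeeping rather than conceptual: one must verify that removing $q_0$ from $W$ does not damage any of the coprimality or local-density computations — in particular that $W$ is still $\asymp$ a power of $N^{\epsilon_m}$ in size, that $1\leq b\leq W$ with the stated coprimality condition exists (which it does, since the conclusion is only asserted for such $b$), and that the single excluded prime $\asymp\log_2 N$ contributes negligibly to every Euler product. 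Everything else is an instance of the now-standard Maynard--Tao argument, so the write-up would cite \cite{BFM16} for the version with the exceptional modulus excised and simply indicate the modifications.
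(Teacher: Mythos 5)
Your proposal is correct and follows the same route as the paper, which proves this lemma simply by citing \cite[Theorem 4.2]{BFM16}; your sketch is essentially an outline of the proof of that cited theorem (Maynard--Tao weights restricted to $n\equiv b\pmod W$, with the exceptional prime excised from $W$ so the Landau--Page/Siegel-zero issue does not obstruct the equidistribution input), and you even note that the write-up would ultimately cite \cite{BFM16}. The only cosmetic point is that taking $\rho=m-1$ rather than $\rho=m$ already yields the stated conclusion of at least $m$ primes.
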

\begin{proof}
This is  an immediate corollary of \cite[Theorem 4.2]{BFM16}.
\end{proof}
Suppose that $m\geq 1$ and $N$ is sufficiently large. Let $k_m$ and $\epsilon_m$ be the ones described in Lemma \ref{MT}.
Let
\begin{equation}
R:=\epsilon_m\log N
\end{equation}
and
\begin{equation}
M=\prod_{\substack{p\leq R\\p\nmid \cZ_{N^{4\epsilon}}}}p.
\end{equation}
Let
\begin{equation}\label{defx}
x=\frac{R}{C_0}
\end{equation}
and
\begin{equation}\label{defy}
y=c_0x\cdot\frac{\log x\log_3 x}{\log_2 x},
\end{equation}
where $C_0>2$ is a sufficiently large constant and $c_0>0$ is a sufficiently small constant to be chosen later.

According to the Maynard-Tao theorem, we may choose a large integer $g_m$ (only depending on $k_m$) such that for any admissible $\cH=\{h_1,\dots,h_{g_m}\}$ with $h_i=O_{m}(1)$, there exists $n\in[x,2x]$ such that $\{n+h_1,\ldots,n+h_{g_m}\}$ contains at least $k_m$ primes, where $O_m$ means that the implied constant by $O$ only depends on $m$.
Now suppose that $h'_1,\dots,h'_{k_m}$ are distinct primes lying in $\{n+h_1,\ldots,n+h_{g_m}\}$. It is easy to see that
$$
\cH'=\{h'_1,\dots,h'_{k_m}\}
$$
forms an admissible set.
\begin{proposition}\label{Rz} There exists $1\leq b\leq M$ such that
$$
\big((b+h_1')\cdots(b+h_{m}'),M\big)=1
$$
and
$$
(b+\nu,M)>1
$$
for any $\nu\in(x,y]\setminus\cH'$.
\end{proposition}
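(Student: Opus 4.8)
The statement is an Erdős–Rankin type sieving result: we must cover the interval $(x,y]$ (minus the forbidden residues coming from $\cH'$) by congruence classes modulo the small primes dividing $M$, while simultaneously keeping each $b+h_i'$ coprime to $M$. The plan is to mimic the Ford–Green–Konyagin–Maynard–Tao construction of long gaps, carried out in the progression-to-modulus-$M$ setting, and to track where the tuple $\cH'$ intervenes. Write $M = \prod_{p\le R,\, p\nmid\cZ} p$ and recall $x = R/C_0$, $y = c_0 x\log x\log_3 x/\log_2 x$, so that $y$ is of the size to which the FGKMT bound applies. Classify the primes $p\le R$ into three ranges: ``tiny'' primes $p\le \log x$ (or some small threshold), ``medium'' primes in a dyadic range around $x/\log x$ up to $x$, and the remaining ``large'' primes $x<p\le R$; the large primes will each be used to kill one leftover survivor at the very end.

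First I would handle the forbidden set $\cH'=\{h_1',\dots,h_{k_m}'\}$: since these are $O_m(1)$ many integers, each of size $O_m(1)$, I can simply reserve a few of the large primes $p\in(x,R]$ to sieve them — but wait, $\cH'$ lies in $\{n+h_1,\dots,n+h_{g_m}\}$ with $n\in[x,2x]$, so the $h_i'$ are of size $\asymp x$, not $O_m(1)$; the relevant point is only that they are few in number ($k_m$ of them) and distinct. The coprimality requirement $(b+h_i',M)=1$ for $1\le i\le m$ (note: only the first $m$, not all $k_m$) forces $b\not\equiv -h_i'\pmod p$ for every $p\le R$ dividing neither $\cZ$ nor, automatically, anything else; since the $h_i'$ are primes exceeding $R$ (they were chosen as primes in $[x,2x]$, hence $>R$), the class $-h_i'\bmod p$ is a single nonzero residue for each $p$, so this costs us one residue class per prime and leaves $p-1\ge 1$ admissible choices — the constraint is mild and compatible with a greedy/random construction. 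The main work is choosing $b\bmod p$ for each $p\le R$ so that the union of classes $b\equiv -\nu\pmod p$ covers all of $(x,y]\setminus\cH'$.

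The key steps, in order: (1) Fix, for the tiny primes $p\le z_0$ (say $z_0 = \log x$ or a slowly growing function), the residue $b\equiv 0\pmod p$ — wait, we need $b+h_i'\not\equiv 0$, and $h_i'$ is prime $>R\ge p$, so $b\equiv 0$ is fine; this sieves out all $\nu\equiv 0\pmod p$ and in particular removes the ``smooth'' part of $(x,y]$, reducing to survivors $\nu$ that are coprime to $\prod_{p\le z_0}p$, of which there are $\asymp y\prod_{p\le z_0}(1-1/p) \asymp y/\log z_0$. (2) For the medium primes, invoke the FGKMT machinery (the ``hypergraph covering'' / Maynard's efficient random sieve, which we are entitled to cite as it underlies (\ref{FGKMTbound})): a random choice of residues $b\bmod p$ for $p$ in the medium range covers all but $o(x/\log x)$ of the remaining survivors, using the fact that $y/x \asymp \log x\log_3 x/\log_2 x$ is exactly the FGKMT threshold. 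The constants $C_0$ large and $c_0$ small are precisely the knobs that make this covering step succeed. (3) After steps (1)–(2), at most $O(x/\log x)$ survivors remain in $(x,y]$, and there are $\asymp x/\log x$ large primes $p\in(x,R]=(x,C_0 x]$ unused; assign to each remaining survivor $\nu$ its own large prime $p_\nu$ and set $b\equiv -\nu\pmod{p_\nu}$, which is consistent since each such prime is used once and $-\nu\not\equiv -h_i'\pmod{p_\nu}$ can be arranged because $\nu\notin\cH'$ and $p_\nu>x>|\nu - h_i'|$ is false — rather, one checks $\nu\ne h_i'$ already gives $\nu\not\equiv h_i'$ once $p_\nu$ exceeds the (bounded-in-a-suitable-sense) differences, which may require a minor pigeonhole to avoid the finitely many bad primes. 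Finally, CRT assembles all these local choices into a single $b\bmod M$.

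The main obstacle is step (2): verifying that the FGKMT covering theorem applies verbatim in the arithmetic-progression setting with modulus $M$ (rather than the interval $[1,x]$) and with the survivors being those integers coprime to $\prod_{p\le z_0}p$ and avoiding $\cH'$. This should follow because the sieve weights and the second-moment/entropy estimates in FGKMT only use that the survivor set is a ``nice'' density-$1/\log z_0$ subset of an interval of length $y$, and removing $k_m = O(1)$ extra points is negligible; but making this precise requires re-deriving (or carefully quoting) the relevant covering lemma with the exceptional modulus $\cZ_{N^{4\epsilon}}$ excluded from $M$ — the exclusion is harmless since we lose at most one prime factor. I expect the rest (steps (1) and (3), and the final CRT) to be routine bookkeeping.
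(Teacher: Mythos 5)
Your overall architecture --- random FGKMT-style sieving by the small and medium primes, cleanup of the few leftover survivors using the primes of $(x,C_0x]$ one apiece, then CRT --- is exactly the paper's (the paper packages the sieving step as its Propositions 3.1 and 3.2 and proves them in Sections 3--5). But the accounting of survivors in your steps (1)--(2) has a genuine gap that would sink a literal execution. Assigning the zero residue only to primes $p\le z_0$ with $z_0=\log x$ leaves $\asymp y/\log z_0\asymp x\log x\log_3x/(\log_2x)^2$ survivors in $(x,y]$, while the medium primes $p\in(x/2,x]$ number $O(x/\log x)$ and each covers only $O(y/x)=O(\log x\log_3x/\log_2x)$ points of the interval, for a total coverage of $O(x\log_3x/\log_2x)$ --- short by a factor of $\log x/\log_2 x$. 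The covering theorem you want to quote does not cover ``a nice density-$1/\log z_0$ subset''; it covers only the \emph{primes} $q\in(x,y]$ that survive a genuinely random (not zero-residue) first sieve over all of $\cS$, the primes in $(\log^{20}x,\alpha]$ with $\alpha=x^{\log_3x/(4\log_2x)}$, which first cuts the count to $\sim\sigma y/\log x\asymp x\log_2x/\log x$. Relatedly, you never control the \emph{composite} survivors: after giving zero residues to $p\le\log^{20}x$ and to $p\in(\alpha,x/2]$, the composites of $(x,y]$ that remain are precisely the $\alpha$-smooth numbers with no tiny prime factor, and the paper needs de Bruijn's theorem to bound these by $y/\log^{4+o(1)}x=o(x/\log x)$ before they, too, can be handed to the large-prime cleanup. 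Without that input the cleanup step has more customers than primes.

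Two smaller slips: the $h_i'$ are primes in $[x,2x+O_m(1)]$, hence at most $R=C_0x$, not greater than $R$; coprimality of $b+h_i'$ with the small and medium primes holds simply because $h_i'$ is a prime exceeding every $p\le x$, and throughout one only ever forbids the residues $h_i'\bmod p$ (i.e., works inside the classes $\bar{\cE}_p$), which is what keeps the constraint compatible with the random construction. In the cleanup step, the reason each survivor $t$ excludes only $O_m(1)$ primes of $(x,C_0x]$ is that $t-h_j'$ is a nonzero integer with $|t-h_j'|<y<x^2$ and so has at most one prime factor exceeding $x$; that is the precise form of the ``minor pigeonhole'' you allude to, and it is why the paper arranges for the number of primes in $(x,C_0x]$ to exceed $2|\cT|$.
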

Let us explain why Proposition \ref{Rz} implies Theorem. Now $$x\leq h_1',\ldots,h'_{k_m}\leq 2x\leq R.$$ So for any $1\leq i<j\leq k_m$, $h_j'-h_i'$ has no prime factor greater than $R$. By Lemma \ref{MT}, there exists $n\in[N,2N]$  with $$n\equiv b\pmod{M}$$ such that
$$
\{n+h_1',\ldots,n+h_{k_m}'\}
$$
contains at least $m$ primes. Without loss of generality, we may assume that $q_1,\ldots,q_{m'}$ with $q_1<\cdots<q_{m'}$ are all primes lying in $\{n+h_1',\ldots,n+h_{k_m}'\}$, where $m'\geq m$. Clearly
$$
q_{m'}-q_1\leq h_{k_m}'-h_1'=O_m(1).
$$

On the other hand, according to Proposition \ref{Rz}, for any $\nu\in(x,y]\setminus\cH'$, evidently $n+\nu$ can't be a prime since it isn't prime to $M$. Let $q_{m'+1}$ be the least prime greater than $n+y$. Thus $q_1,\ldots,q_{m'},q_{m'+1}$ are consecutive primes such that $q_1,\ldots,q_{m'}$ lie in a bounded interval,  and
$$
q_{m'+1}-q_{m'}\geq y-2x\gg \frac{\log N\log_2 N\log_4 N}{\log_3 N}.
$$

\section{The application of sieve method}
\setcounter{lemma}{0}
\setcounter{corollary}{0}
\setcounter{remark}{0}
\setcounter{equation}{0}
\setcounter{conjecture}{0}
\setcounter{proposition}{0}
Let
\begin{equation}\label{defalpha}
\alpha:=x^{\frac{\log_3 x}{4\log_2 x}}.
\end{equation}
Let
\begin{equation}\label{setS}
\cS:=\{s\text{ prime :}\log^{20} x<s\leq \alpha \}\backslash\{\cZ_{N^{4\epsilon}}\},
\end{equation}
\begin{equation}\label{setP}
\cP:=\{p\text{ prime :}x/2<p\leq x \}\backslash\{\cZ_{N^{4\epsilon}}\},
\end{equation}
and
\begin{equation}\label{setQ}
\cQ:=\{q\text{ prime :}x<q\leq y \}.
\end{equation}
For a prime $p$, let
$$
\cE_p:=\{n\in\Z:\,n\not\equiv h_i'\pmod{p}\text{ for all }1\leq i\leq k_m\}
$$
and
$$
\bar{\cE}_p:=\{a\mod{p}:\,a\not\equiv h_i'\pmod{p}\text{ for all }1\leq i\leq k_m\}.
$$
For residue class $\vec{a}=(a_s\in\bar{\cE}_s)_{s\in \cS}$ and $\vec{b}=(b_p\in\bar{\cE}_p)_{p\in \cP}$, define
$$
S(\vec{a}):=\{ n\in \Z:\,n\not\equiv a_s\pmod{s} \text{ for all }s\in\cS \}
$$
and
$$
S(\vec{b}):=\{ n\in \Z:n\not\equiv b_p\pmod{p} \text{ for all }p\in\cP \}.
$$
\begin{proposition}\label{sievep}
There are vector $\vec{a}=(a_s\in\bar{\cE}_s)_{s\in \cS}$ and $\vec{b}=(b_p\in\bar{\cE}_p)_{p\in \cP}$ such that
\begin{equation}\label{QjSajSb}
\big|\cQ\cap S(\vec{a})\cap S(\vec{b})\big|\ll \frac{x}{\log x}.
\end{equation}
\end{proposition}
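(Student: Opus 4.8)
The plan is to construct $\vec a$ and $\vec b$ by the probabilistic method: sample the two vectors independently from suitable ensembles and estimate
\[
\E\bigl|\cQ\cap S(\vec a)\cap S(\vec b)\bigr|=\sum_{q\in\cQ}\P\bigl[q\in S(\vec a)\bigr]\,\P\bigl[q\in S(\vec b)\bigr],
\]
since any realisation attaining this expectation satisfies \eqref{QjSajSb}. The division of labour is the one of Ford-Green-Konyagin-Maynard-Tao: the small primes $\cS$ act as a plain random sieve, which by Mertens' theorem thins $\cQ$ down to $O(x\log_2x/\log x)$, while the primes $\cP$ near $x$ act as a Maynard-Tao-weighted sieve, which must supply the remaining factor $\asymp\log_2 x$. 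The first step is routine; the second is the crux.

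For $\cS$ I would take $a_s$ uniform on $\bar\cE_s$, independently over $s\in\cS$. For a fixed prime $q\in\cQ\setminus\cH'$ and each $s\in\cS$ not dividing $\prod_{i\le k_m}(q-h_i')$ one has $\P[q\equiv a_s\pmod s]=1/|\bar\cE_s|=\tfrac1s+O(1/s^2)$, while the factor $1-\P[q\equiv a_s\pmod s]$ equals $1$ for the remaining $O(\log x/\log_2 x)$ values of $s$; deleting the single modulus $\cZ_{N^{4\epsilon}}$ costs a factor $1+o(1)$. Hence, by Mertens' theorem, $\P[q\in S(\vec a)]=(1+o(1))\prod_{\log^{20}x<s\le\alpha}(1-1/s)\asymp(\log_2 x)^2/(\log x\,\log_3 x)$ uniformly in such $q$; since $|\cQ|\sim y/\log x=c_0x\log_3x/\log_2x$, this yields $\E|\cQ\cap S(\vec a)|\ll x\log_2x/\log x$, and a good $\vec a$ with this bound exists by Markov's inequality. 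Note that a plain random sieve over $\cS$ reaches exactly the order $x\log_2x/\log x$, which exceeds the target $x/\log x$ by a factor $\asymp\log_2 x$; closing this gap is the job of $\cP$.

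For $\cP$, following Ford-Green-Konyagin-Maynard-Tao, I would attach to a large admissible tuple $\cH^{*}=\{h_1^{*},\dots,h_k^{*}\}$ with $k=(\log x)^{O(1)}$ and $\max_i h_i^{*}\ll y$ the Maynard-Tao sieve weights, and use the multidimensional sieve estimates — in the Bank-Freiberg-Maynard form of \cite{BFM16}, which is precisely why $\cZ_{N^{4\epsilon}}$ was deleted from $M$, from $\cS$ and from $\cP$ — to construct, for each $p\in\cP$, a probability measure $\mu_p$ supported on $\bar\cE_p$, from which $b_p$ is drawn independently, with the property
\begin{equation}\label{keyP}
\sum_{p\in\cP}\mu_p\bigl(\{q\bmod p\}\bigr)\ \gg\ \log_3 x\qquad\text{for every }q\in\cQ\text{ outside an exceptional set of size }O(x/\log x).
\end{equation}
Given \eqref{keyP}, for the non-exceptional $q$ we get $\P[q\in S(\vec b)]=\prod_{p\in\cP}\bigl(1-\mu_p(\{q\bmod p\})\bigr)\le\exp\bigl(-\sum_{p\in\cP}\mu_p(\{q\bmod p\})\bigr)\ll1/\log_2x$, hence, combining with the previous paragraph,
\[
\E\bigl|\cQ\cap S(\vec a)\cap S(\vec b)\bigr|\ \ll\ \frac{(\log_2 x)^2}{\log x\,\log_3x}\cdot\frac{1}{\log_2x}\cdot|\cQ|\ +\ \frac{x}{\log x}\ \ll\ \frac{x}{\log x},
\]
which is \eqref{QjSajSb}. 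The heuristic behind \eqref{keyP} is that the Maynard-Tao weights force $\mu_p$ to concentrate on residue classes abnormally rich in primes of $\cQ$, so each $p\in\cP$ becomes responsible for $\asymp\log_2 x$ primes of $\cQ$ rather than the $O(1)$ a uniform choice would catch; this gain of $\log_2 x$ is exactly what replaces the Erd\H os-Rankin denominator $\log_3^2 x$ by $\log_3 x$.

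The main obstacle is \eqref{keyP}. Producing the measures $\mu_p$ and verifying the concentration requires the full strength of the multidimensional sieve (the lower bound producing $\gg\log k$ primes in a $k$-tuple) together with a covering/second-moment argument in the style of Ford-Green-Konyagin-Maynard-Tao that upgrades the fractional cover implicit in the weights to genuine measures while keeping the exceptional $q$-set of size $O(x/\log x)$; one must also check that the relevant sieve sums survive the removal of the primes $\le\log^{20}x$ and of the modulus $\cZ_{N^{4\epsilon}}$, and that imposing $b_p\in\bar\cE_p$ and $a_s\in\bar\cE_s$ costs only $1+o(1)$, which it does since $k_m=O(1)$. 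These verifications occupy the next two sections.
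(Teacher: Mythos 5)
Your two-stage architecture is the right one, and your treatment of the $\cS$-stage is essentially the paper's Lemma 4.4 (with probability $1-o(1)$ one has $|\cQ\cap S(\vec{\a})|\sim\sigma y/\log x\asymp x\log_2x/\log x$). But the step you yourself identify as the crux — your key claim \eqref{keyP}, i.e.\ the lower bound $\sum_{p\in\cP}\mu_p(\{q\bmod p\})\gg\log_3x$ for all $q\in\cQ$ outside a set of size $O(x/\log x)$ — is not merely hard: it is false for \emph{every} choice of probability measures $\mu_p$ on $\Z/p\Z$, so the independent-sampling argument built on it cannot be repaired. Indeed, a residue class modulo $p\in\cP$ meets $(x,y]$ in at most $y/p+1\le 2y/x+1$ integers, so by double counting
\begin{equation*}
\sum_{q\in\cQ}\sum_{p\in\cP}\mu_p\bigl(\{q\bmod p\}\bigr)\le\sum_{p\in\cP}\max_{b}\#\{q\in\cQ:q\equiv b\ (\mathrm{mod}\ p)\}\le|\cP|\Bigl(\frac{2y}{x}+1\Bigr)\ll\frac{x}{\log x}\cdot\frac{\log x\log_3x}{\log_2x}\asymp c_0^{-1}|\cQ|,
\end{equation*}
whence the average of $\sum_{p\in\cP}\mu_p(\{q\bmod p\})$ over $q\in\cQ$ is $O(1)$. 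Since your exceptional set has size $O(x/\log x)=o(|\cQ|)$, demanding $\gg\log_3x$ on the remaining $(1-o(1))|\cQ|$ primes forces this average to be $\gg\log_3x$, a contradiction. Each $p\in\cP$ simply does not carry enough mass to give every $q\in\cQ$ a share of size $\log_3x$; consequently your bound $\P[q\in S(\vec b)]\ll1/\log_2x$ is unavailable, and the factor $\log_2x$ you need from the $\cP$-stage is exactly the factor that independent sampling cannot deliver.

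This is precisely the difficulty that the hypergraph covering theorem of \cite{FGKMT17} was designed to overcome, and it is the ingredient your proposal omits. The paper does not choose $\vec b$ by one round of independent sampling. It proves Proposition \ref{RanCon}: random sets $\e_p(\vec a)=\{\n_p+v_ip:1\le i\le r\}\cap\cQ\cap S(\vec a)$ whose covering degrees satisfy $\sum_{p\in\cP}\P[q\in\e_p(\vec a)\mid\vec{\a}=\vec a]=C+O_{\le}((\log_2x)^{-2})$ for almost all surviving $q$, with $C\asymp c_0^{-1}$ a large but \emph{fixed} constant and with each individual probability at most $x^{-1/2-1/10}$. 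For such an ensemble a single independent sample leaves each $q$ uncovered with probability $\prod_p(1-\P[q\in\e_p(\vec a)])=e^{-C+o(1)}$, a positive constant, hence $\gg x\log_2x/\log x$ survivors; forcing $C\gg\log_3x$ instead would require $c_0\ll1/\log_3x$ and destroy the gap $y-2x\gg x\log x\log_3x/\log_2x$. The paper therefore feeds conditions (I)--(III) into the Pippenger--Spencer type semi-random (R\"odl nibble) covering theorem of \cite{FGKMT17}, which iteratively extracts a deterministic choice $b_p=\n_p\bmod p$ covering all but an $O(1/\log_2x)$ proportion of $\cQ\cap S(\vec a)$ while spending only $O(1)$ covering mass per point. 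The near-exact equality of degrees in (III) and the uniform smallness in (I) are exactly the hypotheses that theorem requires — which is why Proposition \ref{RanCon} is formulated as it is, and why Sections 4 and 5 verify (I)--(III) rather than performing any direct expectation computation for $\vec b$.
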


Let us explain why Proposition \ref{sievep} implies Theorem \ref{main}.
Let $\vec{a}$ and $\vec{b}$ be as in Proposition \ref{sievep}. We extend the tuple $\vec{a}=(a_s)_{s\in\cS}$ to $(a_p)_{p\in[1,x]\setminus\{\cZ_{N^{4\epsilon}}\}}$, by setting $a_p:=b_p$ for $p\in \cP$, $a_p:=0$ for $p\in[1,x]\setminus(\cS\cup\cP\cup\{\cZ_{N^{4\epsilon}}\})$. Let
$$
\cT:=\big\{ n\in(x,y]:n\not\equiv a_p\pmod{p}\text{ for all }p\in[1,x]\setminus\{\cZ_{N^{4\epsilon}}\} \big\}.
$$
For any prime $q\in\cQ\cap S(\vec{a})\cap S(\vec{b})$, since $q>x$, clearly $$q\not\equiv 0=a_p\pmod{p}$$
for each $p\in[1,x]\setminus(\cS\cup\cP\cup\{\cZ_{N^{4\epsilon}}\})$. It follows that
$$
\cQ\cap S(\vec{a})\cap S(\vec{b})\subseteq\cT.
$$
Suppose that $n\in \cT\setminus(\cQ\cap S(\vec{a})\cap S(\vec{b}))$. Then
$n$ is composite and no prime factor of $n$ lies in $([1,\log^{20} x]\cup(\alpha,x/2])\setminus \{\cZ_{N^{4\epsilon}}\}$.
Also, since $y/x=o(\log x)$ and recalling that
$$
\cZ_{N^{4\epsilon}}\gg\log\log N\gg\log x,
$$
we obtain that $n$ is not divisible by any prime lying in $(x/2,y]$. It follows that all prime factors of $n$ all lie in  $(\log^{20} x,\alpha]$.
Let $\cR$ be the set of $\alpha$-smooth numbers, i.e., a number with all prime factors at most $\alpha$. By the well-known de Brujin's theorem \cite{deB51} and (\ref{defy}), we have
$$
\big|\cR\cap[1,y]\big|= e^{-u\log u+O(u\log\log(u+2))}=\frac{y}{\log^{4+o(1)}x}=o\bigg(\frac{x}{\log x}\bigg),
$$
where
$$
u:=\frac{\log y}{\log \alpha}\sim \frac{4\log_2 x}{\log_3 x}
$$
by (\ref{defy}) and (\ref{defalpha}).
Hence $$
\big|\cT\setminus\big(\cQ\cap S(\vec{a})\cap S(\vec{b})\big)\big|=o\bigg(\frac{x}{\log x}\bigg),$$
i.e.,
$$
|\cT|=O\bigg(\frac{x}{\log x}\bigg)
$$
by (\ref{QjSajSb}).

Now let $C_0$ in (\ref{defx}) be a sufficiently large constant such that $$
|\cT|\leq\frac{(C_0-1)x}{3\log x},
$$
i.e., $2|\cT|$ is less than the number of primes in $(x,C_0x]$. Assume that $\cT\setminus\cH'=\{t_1,\ldots,t_h\}$. Note that for any $1\leq i\leq h$ and $1\leq j\leq k_m$, $t_i-h_j'$ has at most one prime factor lying in $(x,C_0x]$, since $t_i\leq y<x^2$. Choose  distinct primes $p_1,\ldots,p_h$ in $(x,C_0x]$ such that
$t_i\not\equiv h_j'\pmod{p_i}$ for any $1\leq i\leq h$ and $1\leq j\leq k_m$. Set $a_{p_i}=t_i$ for each $1\leq i\leq h$. Then $a_{p_i}\in\bar{\cE}_{p_i}$. Also, for each $p\in (x,C_0x]\setminus\{p_1,\ldots,p_h\}$, we may arbitrarily choose $a_p\in\bar{\cE}_p$.
Recall that $C_0x=R$ and $M$ is the product of all primes in $[1,R]\setminus\{\cZ_{N^{4\epsilon}}\}$. By the Chinese remainder theorem, there exists $1\leq b\leq M$ such that
$$
b\equiv -a_{p}\pmod{p}
$$
for each prime $p\in [1,C_0x]\setminus\{\cZ_{N^{4\epsilon}}\}$. We claim that $b$ satisfies the requirements of Theorem \ref{main}. It is easy to see that $a_p\in\bar{\cE}_{p}$ for any $p\in [1,C_0x]\setminus\{\cZ_{N^{4\epsilon}}\}$. So we have $(b+h_1')\cdots(b+h_{k_m}')$ is prime to $M$. Also, suppose that $\nu\in(x,y]\setminus\cH'$. If $\nu\not\in\cT$, then $\nu\equiv a_p\pmod{p}$ for some $p\in [1,x]\setminus\{\cZ_{N^{4\epsilon}}\}$, i.e., both $b+\nu$ and $M$ are divisible by $p$. If $\nu\in\cT\setminus\cH'$, then we also have $\nu=t_i=a_{p_i}$ for some $1\leq i\leq h$. So we always have $(b+\nu,M)>1$ for any $\nu\in(x,y]\setminus\cH'$. Thus Theorem \ref{main} is concluded.

In fact, according to the discussions of \cite{FGKMT17}, Proposition\ref{sievep} can be deduced from the following weaker random construction.
\begin{proposition}\label{RanCon}
Suppose that $x$ is sufficiently large and $y$ is the one in (\ref{defy}). Then there is a quantity $C$ with $C\asymp c_0^{-1}$
, a tuple of positive integers $(v_1,\dots,v_r)$ with $r\leq\sqrt{\log x}$ and some way to choose random vector $\vec{\a}=(\a_s\not\equiv h'_i\mod s)_{s\in\cS}$ and $\vec{\n}=({\n_p\in\cE_p})_{p\in\cP}$ of congruence classes $\a_s\mod s$ and integers $\n_p$ respectively, satisfying the following requirements:

\medskip\noindent
(I) For every $\vec{a}$ in the essential range of $\vec{\a}$, we have
\begin{equation}\label{Pqine}
\P\big[q\in\e_p(\vec{a})\big|\,\vec{\a}=\vec{a}\big]\leq x^{-\frac12-\frac1{10}}
\end{equation}
for each $p\in\cP$, where $\e_p(\vec{a}):=\{\n_p+v_ip:1\leq i\leq r\}\cap\cQ\cap S(\vec{a})$.\\

\medskip\noindent(II) With probability $1-o(1)$, we have
\begin{equation}\label{QjSa}
\#\big(\cQ\cap S(\vec{a})\big)\sim 80c\cdot\frac{x\log_2{x}}{\log x}.
\end{equation}

\medskip\noindent(III) Suppose that $\vec{a}$ lies in the essential range of $\vec{\a}$. We call $\vec{a}$ good, provided that for all but at most $x/(\log x\log_2 x)$ elements $q\in\cQ\cap S(\vec{a})$,
\begin{equation}\label{RanConSumP}
\sum_{p\in\cP}\P\big[q\in\e_p(\vec{a})\big|\,\vec{\a}=\vec{a}\big]=C+O_{\leq}\bigg(\frac{1}{(\log_2 x)^2}\bigg),
\end{equation}
where $O_\leq$ means that the implied constant by $O$ is just $1$.
Then with probability $1-o(1)$, $\vec{a}$ is good.
\end{proposition}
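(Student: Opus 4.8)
The plan is to adapt the probabilistic construction of Ford--Green--Konyagin--Maynard--Tao \cite{FGKMT17} to our setting, where the modulus $W$ (resp. $M$) omits the possible exceptional prime $\cZ_{N^{4\epsilon}}$, and where the sieving primes are split into the ``small'' set $\cS$ (primes up to $\alpha$) and the ``large'' set $\cP$ (primes near $x$). First I would fix the weight parameters: set $C\asymp c_0^{-1}$ and choose the tuple $(v_1,\dots,v_r)$ with $r\le\sqrt{\log x}$ exactly as in \cite{FGKMT17}, namely arising from a suitable smooth-number or GPY-type weight construction that makes the shifts $\n_p+v_ip$ equidistributed enough. The random congruence classes $\a_s \mod s$ for $s\in\cS$ should be chosen uniformly and independently from $\bar\cE_s$ (equivalently, $\a_s$ avoids the $k_m$ forbidden residues $h_i' \bmod s$), and the integers $\n_p$ for $p\in\cP$ should be chosen according to the weighted distribution on $\cE_p\cap[1,x]$ dictated by the sieve weights, again independently across $p$.

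The next step is to verify the three requirements in order. For (I), the bound $\P[q\in\e_p(\vec a)\mid\vec\a=\vec a]\le x^{-1/2-1/10}$ follows because, conditionally on $\vec a$, the event $q\in\e_p(\vec a)$ forces $\n_p \equiv q - v_i p \pmod{p}$ for some $i\le r$ with $q\in\cQ\cap S(\vec a)$; since $\n_p$ ranges over roughly $x/\log x$ admissible residues and $r\le\sqrt{\log x}$, the probability is $\ll r/(x/\log x)\ll x^{-1/2}\log^{3/2}x \le x^{-1/2-1/10}$ for $x$ large (one must check the weighted distribution does not concentrate, which is where the construction of the $v_i$'s is used). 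For (II), the count $\#(\cQ\cap S(\vec a))$ has expectation $\sim|\cQ|\prod_{s\in\cS}(1-1/s)\cdot(\text{correction for }\bar\cE_s)$; using Mertens' theorem over the range $(\log^{20}x,\alpha]$ together with $|\cQ|\sim(y-x)/\log x \asymp c_0 x\log_3 x/\log_2 x$ and $u=\log y/\log\alpha\sim 4\log_2 x/\log_3 x$, the main term works out to $\sim 80c\cdot x\log_2 x/\log x$; concentration with probability $1-o(1)$ comes from a second-moment computation, the $\a_s$ being independent, exactly as in \cite[\S4]{FGKMT17}. For (III), the inner sum $\sum_{p\in\cP}\P[q\in\e_p(\vec a)\mid\vec\a=\vec a]$ has conditional expectation (over the random choice of $\vec a$, for fixed $q$) equal to $C+o(1)$ by the choice of normalization of the weights; one then shows that the set of ``bad'' $q$ for which this sum deviates by more than $(\log_2 x)^{-2}$ has expected size $o(x/(\log x\log_2 x))$ via a variance bound, and concludes by Markov that $\vec a$ is good with probability $1-o(1)$.

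I expect the main obstacle to be twofold. First, one must carefully propagate the omission of the exceptional prime $\cZ_{N^{4\epsilon}}$ through all the Mertens-type products and the sieve weights; because $\cZ_{N^{4\epsilon}}\gg\log_2 N\gg\log x$, this prime (if it exists at all) is negligible in every product over $\cS$ or $\cP$, so the estimates are unaffected up to $1+o(1)$ factors, but this has to be said cleanly. Second, and more substantively, verifying (III) requires that the \emph{conditional} expectation of $\sum_p \P[q\in\e_p(\vec a)\mid\vec\a=\vec a]$ over the small-prime randomness concentrates; this is the delicate ``reduction to a random construction'' step from \cite{FGKMT17}, and it hinges on showing that the events $\{q\in S(\vec a)\}$ for different $q$ are sufficiently decorrelated, which in turn uses that the moduli in $\cS$ are all $\le\alpha=x^{o(1)}$ so that no small modulus can force congruences simultaneously on too many $q\in\cQ$. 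Once these points are handled, the deduction of Proposition \ref{sievep} from Proposition \ref{RanCon} proceeds by the now-standard argument of \cite{FGKMT17}: use (I) and (III) to run an iterative random selection of the $\n_p$ (a Lovász-local-lemma or alteration argument) that removes all but $O(x/\log x)$ elements of $\cQ\cap S(\vec a)$ while keeping the $b_p$ in $\bar\cE_p$, yielding (\ref{QjSajSb}).
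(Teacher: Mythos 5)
Your outline follows the same route as the paper---import the sieve weight $w(p,n)$ of \cite[Theorem 6]{FGKMT17} (Lemma \ref{SieveWeight} here), choose the $\a_s$ uniformly from $\bar{\cE}_s$, verify (I) from a sup-norm bound, (II) by a second moment, and (III) by first/second moments plus Markov---but there is a genuine gap at the heart of the construction: your $\n_p$ is not the right random variable. You take $\n_p$ to be distributed according to the weights $w(p,\cdot)$, independently of $\vec{\a}$. With that choice, for $q\in\cQ\cap S(\vec a)$ one would have
$$
\sum_{p\in\cP}\P\big[q\in\e_p(\vec a)\,\big|\,\vec{\a}=\vec a\big]=\sum_{i=1}^{r}\sum_{p\in\cP}\P[\tilde{\n}_p=q-v_ip]\sim\frac{ux}{2y}\asymp\frac{(\log_2 x)^2}{c_0\log x\log_3 x}
$$
by (\ref{SWsumWpqhp}), which tends to $0$; so (III) could only hold with $C=o(1)$, not $C\asymp c_0^{-1}$, and the construction becomes useless for the covering argument that deduces Proposition \ref{sievep}. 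What is actually needed (and what the paper does in (\ref{Xpa})--(\ref{PZ})) is to make the law of $\n_p$ depend on $\vec a$: given $\vec{\a}=\vec a$, one must take $\n_p$ to be $\tilde{\n}_p$ \emph{conditioned} on the event $\{\tilde{\n}_p\in\cE_p\text{ and }\tilde{\n}_p+v_ip\in S(\vec a)\text{ for all }1\leq i\leq r\}$. This conditioning divides by $X_p(\vec a)\approx\sigma^{r}$ while the constraint itself costs only a factor $\approx\sigma^{r-1}$ for the relevant terms (since $q\in S(\vec a)$ is already given for the index $i$ with $q=\n_p+v_ip$); the net gain $\sigma^{-1}\asymp\log x\log_3 x/(\log_2 x)^2$ is precisely what converts $ux/2y$ into $C=\frac{u}{\sigma}\cdot\frac{x}{2y}\asymp c_0^{-1}$.

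Once the conditioning is in place, a second point you do not address becomes unavoidable: the normalizer $X_p(\vec a)$ is only close to $\sigma^r$ for \emph{most} pairs $(p,\vec a)$, so one must restrict to the set $\cP(\vec a)$ of primes satisfying (\ref{Xpabound}) (setting $\n_p=0$ elsewhere), show that $\cP(\vec\a)$ omits only $O(x\log^{-4}x)$ primes with high probability (Lemma \ref{FGKMTlem63}, a second-moment argument resting on the correlation estimate of Lemma \ref{FGKMTcorrbound}), and check that the omitted primes contribute negligibly to (\ref{RanConSumP}) (this is (\ref{FGKMTpf62620})). None of this appears in your sketch. A smaller point: your numerical verification of (I) does not close ($x^{-1/2}\log^{3/2}x$ is not $\leq x^{-1/2-1/10}$, and the distribution is weighted rather than uniform over $\asymp x/\log x$ residues); the correct source of (I) is the pointwise bound $w(p,n)=O(x^{1/3+o(1)})$ against the total mass $\tau x/\log^{r}x\geq x^{1-o(1)}$, giving $\P[\tilde{\n}_p=n]\ll x^{-2/3+o(1)}$, which survives division by $X_p(\vec a)=x^{o(1)}$ and multiplication by $r$.
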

The proof of Proposition \ref{RanCon} will be given in the next section, via a number of auxiliary lemmas.

\section{Proofs of (I) and (II) of Proposition \ref{RanCon}}
\setcounter{lemma}{0}
\setcounter{corollary}{0}
\setcounter{remark}{0}
\setcounter{equation}{0}
\setcounter{conjecture}{0}
\setcounter{definition}{0}
\setcounter{proposition}{0}

\begin{lemma}\label{SieveWeight}
Let $x$ and $y$ be the sufficiently large real numbers defined by (\ref{defx}) and (\ref{defy}). Let $\cP,\cQ$ be defined by (\ref{setP}), (\ref{setQ}). Let $r$ be a positive integer with
\begin{equation}\label{SWr}
r_0\leq r\leq \log^{\eta_0}x
\end{equation}
for some sufficiently large absolute constant $r_0$ and sufficiently small $\eta_0>0$, let $(v_1,\dots,v_r)$ be an admissible $r$-tuple contained in $[2r^2]$. Then one can find a positive integer quantity
\begin{equation}\label{SWtau}
\tau\geq x^{-o(1)}
\end{equation}
and a positive quantity $u=u(r)$ depending only on $r$ with
\begin{equation}\label{SWu}
u\asymp\log r
\end{equation}
and a non-negative $w:\cP\times\Z\to\R^+$ supported on $\cP\times[-y,y]$ with the following properties:
\begin{itemize}
\item Uniformly for every $p\in\cP$, one has
\begin{equation}\label{SWsumWpn}
\sum_{n\in\Z}w(p,n)=(1+O(\frac{1}{\log^{10}_2x}))\tau\frac{x}{\log^r x}
\end{equation}
\item Uniformly for every $q\in\cQ$ and $i=1,\dots,r$ one has
\begin{equation}\label{SWsumWpqhp}
\sum_{p\in\cP}w(p,q-v_ip)=(1+O(\frac{1}{\log^{10}_2x}))\tau\frac{u}{r}\frac{x}{2\log^r x}
\end{equation}
\item Uniformly for every $v=O(y/x)$ that is not equal to any of the $v_i$, one has
\begin{equation}\label{SWsumsumWpqhp}
\sum_{q\in\cQ}\sum_{p\in\cP}w(p,q-vp)=O(\frac{1}{\log_2^{10}x}\tau\frac{x}{\log^r x}\frac{y}{\log x})
\end{equation}
\item Uniformly for all $p\in\cP,n\in\Z$
\begin{equation}\label{SWWpn}
w(p,n)=O(x^{1/3+o(1)})
\end{equation}
\end{itemize}
\end{lemma}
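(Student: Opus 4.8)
The plan is to construct the sieve weight $w(p,n)$ following the now-standard recipe from Ford--Green--Konyagin--Maynard--Tao (see also the treatment in Maynard's large-gaps paper), adapted to the present setup where the residue classes $h'_1,\dots,h'_{k_m}$ have already been fixed and must be avoided. Concretely, I would set
$$
w(p,n):=\1_{p\in\cP}\cdot\1_{\substack{P^+(n)\le x^{1/\log_2 x}}}\cdot\Big(\sum_{\substack{d\mid P(n,z)\\ d\le y}}\lambda_d\Big)^2\cdot\prod_{i=1}^r\1_{(n+v_ip,\,\text{small primes})=1},
$$
but it is cleaner to use the Maynard-type multidimensional sieve weights directly: for each prime $p\in\cP$ one wants $w(p,n)$ to concentrate on $n$ such that all of $n+v_1 p,\dots,n+v_r p$ are ``almost prime'' (free of prime factors up to some threshold $z=x^{1/u}$), weighted by a smooth cutoff supported on $|n|\le y$. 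The parameter $u\asymp\log r$ arises exactly as the sieve dimension playing against the $r$ linear forms, and $\tau=x^{-o(1)}$ is the usual loss from restricting to smooth numbers and from the truncation of the sieve at level $y$ rather than a power of $x$.

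The key steps, in order: \textbf{(1)} Write down the GPY/Maynard weight attached to the $r$-tuple $(v_1,\dots,v_r)$, i.e.\ $w(p,n)=\big(\sum_{\mathbf d}\lambda_{\mathbf d}\big)^2$ where $\lambda_{\mathbf d}$ is the optimized smooth multiplier supported on $d_1\cdots d_r\le y$, with $d_i\mid (n+v_ip)$, and multiply by the indicator that $p\in\cP$ and $n\in[-y,y]$. \textbf{(2)} Prove (\ref{SWsumWpn}): for fixed $p$, sum over $n$ in a window of length $\asymp y/x$ copies shifted appropriately — actually sum over $n\in\Z$ with $|n|\le y$; this is a one-dimensional count of integers in an interval lying in prescribed residue classes mod $\prod d_i$, so the main term is $y\cdot(\text{singular series})/\prod d_i$ and after executing the $\lambda_{\mathbf d}$-optimization one gets $\tau\, x/\log^r x$ with the claimed relative error $O(1/\log_2^{10}x)$; the admissibility of $(v_1,\dots,v_r)\subseteq[2r^2]$ guarantees the local factors are nonzero. \textbf{(3)} Prove (\ref{SWsumWpqhp}): now $q\in\cQ$ is fixed and one sums over $p\in\cP$; substituting $n=q-v_ip$ turns this into a sum over primes $p\in(x/2,x]$ with $p$ in residue classes modulo the $d_j$'s, so the Bombieri--Vinogradov theorem (applicable since the moduli are at most $y^2=x^{2+o(1)}$, well within the BV range for the prime-counting function over $p\le x$) produces the main term $\pi(x/2)\cdot(\text{singular series})$, and the $v_i$-symmetry together with the sieve optimization yields the factor $(u/r)\cdot x/(2\log^r x)$. \textbf{(4)} Prove (\ref{SWsumsumWpqhp}): here $v\notin\{v_1,\dots,v_r\}$, so the tuple $(v_1,\dots,v_r,v)$ is ``one form too many'' for the sieve — the extra constraint $q\equiv n\pmod{\text{stuff}}$ with $n=q-vp$ is genuinely independent, which collapses the main term and leaves only the error term of size $O(\tau\, (x/\log^r x)(y/\log x)/\log_2^{10}x)$; this is where one spends the saving in $u$. \textbf{(5)} Prove the pointwise bound (\ref{SWWpn}): the divisor sum $\sum_{\mathbf d\le y}\lambda_{\mathbf d}$ is $O(y^{o(1)})=O(x^{o(1)})$ by the divisor bound since $y=x^{1+o(1)}$, and for fixed $p,n$ there are $O(x^{o(1)})$ admissible $\mathbf d$, giving $O(x^{o(1)})$; the stated exponent $1/3$ is a comfortable overestimate. \textbf{(6)} Finally verify $\tau\ge x^{-o(1)}$ and $u\asymp\log r$ by tracking the GPY-type integral $I_r(F)/J_r(F)$ for the optimal symmetric cutoff, whose value is known to be $\asymp(\log r)/r$, and inserting the de Bruijn smooth-number density for the $P^+\le z$ restriction.

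The main obstacle I expect is step \textbf{(4)}, the vanishing of the main term for spurious shifts $v$: one must argue carefully that when $v\notin\{v_i\}$ the relevant singular series in the combined system $(v_1,\dots,v_r,v)$ contains an extra factor that is $o(1)$ after the $\lambda_{\mathbf d}$-optimization, OR — more robustly — bound that double sum directly by opening both squares, swapping the order of summation to do the $q$-sum (an interval count) first and the $p$-sum second, and checking that the level of distribution is still large enough; the quantitative error saving $1/\log_2^{10}x$ is delicate because it propagates into the ``good $\vec a$'' condition (III) of Proposition \ref{RanCon}, so one cannot afford to be lossy. A secondary technical point is ensuring all the Bombieri--Vinogradov applications in steps (2)--(4) have moduli genuinely below $x^{1/2+o(1)}$ after accounting for the square in the weight; since individual $d_i\le y^{1/2}=x^{1/2+o(1)}$ this is fine for the linear (single-$\lambda$) pieces, but the diagonal terms $d_i d_i'$ can reach $x^{1+o(1)}$, so one restricts to the diagonal-free part being handled by BV and absorbs the rest into the error, exactly as in \cite{FGKMT17}; I would cite that paper for the bookkeeping rather than reproduce it.
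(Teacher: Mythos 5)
The paper's entire proof of this lemma is a one-line citation: the statement is quoted essentially verbatim from \cite[Theorem 6]{FGKMT17}, and no construction is reproduced. Since your proposal also ends by deferring to \cite{FGKMT17} for the bookkeeping, you are in effect taking the same route; the only difference is that you attempt to sketch the internals of the cited theorem, and that sketch contains two claims that are wrong as stated and would derail a self-contained write-up. First, in step (3) you assert that Bombieri--Vinogradov applies because ``the moduli are at most $y^2=x^{2+o(1)}$, well within the BV range.'' This is backwards: Bombieri--Vinogradov only handles moduli up to $x^{1/2-\epsilon}$. The construction works because the divisor sums $\sum_{\mathbf d}\lambda_{\mathbf d}$ are truncated at $d_1\cdots d_r\leq R$ with $R$ a \emph{small} fixed power of $x$ --- not at level $y=x^{1+o(1)}$ as in your step (1) --- precisely so that the pairwise products of moduli arising from opening the square stay below $x^{1/2-\epsilon}$. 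Your closing paragraph partially walks this back but still allows individual $d_i$ up to $x^{1/2+o(1)}$, which is not the actual support. Second, in step (5) the pointwise bound $w(p,n)=O(x^{1/3+o(1)})$ does not follow from ``the divisor bound'' giving $O(x^{o(1)})$: here $r$ may be as large as $\log^{\eta_0}x$, so $\prod_i(n+v_ip)$ has size $x^{O(r)}$ and its divisor function is not $x^{o(1)}$; in \cite{FGKMT17} the bound comes from the truncation level $R$ together with explicit bounds on the coefficients $\lambda_{\mathbf d}$, and the exponent $1/3$ is not a ``comfortable overestimate'' of $o(1)$.

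Two smaller points. The lemma as stated needs no adaptation to the classes $h_1',\dots,h_{k_m}'$; in this paper the avoidance of those classes is imposed only later, through the indicator $n\in\cE_p$ in the definition of $Z_p(\vec a;n)$, so your opening premise slightly misreads what must be proved here. And your step (4) heuristic (``the main term collapses'') should be made precise along the lines you indicate in your final paragraph: for $v\notin\{v_1,\dots,v_r\}$ the form $n+vp$ is not sieved, so requiring $q=n+vp$ to be prime costs an extra factor beyond the sieve main term, which is exactly where the $\log_2^{-10}x$ saving comes from. With these corrections your outline matches what \cite{FGKMT17} actually does, but as written it would not compile into a valid proof; given that the paper itself simply cites \cite[Theorem 6]{FGKMT17}, doing the same is the safest course.
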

\begin{proof}
See \cite[Theorem 6]{FGKMT17}.
\end{proof}
 Let $x,y,c,\cS,\cP$ and $\cQ$ be as in Proposition \ref{RanCon}. Set
 \begin{equation}\label{maxr}
 r:=\lfloor \log^{\eta_0} x \rfloor,
 \end{equation}
where $\eta_0$ is the small constant in Lemma \ref{SieveWeight}.
Let $w:\cP\times\Z\to\R^+$ be a weight satisfying all stated properties in Lemma \ref{SieveWeight}.

For each $p\in\cP$, let $\tilde{\n}_p$ denote the random integer with probability density
$$
\P[\tilde{\n}_p=n]:=\frac{w(p,n)}{\sum_{n'\in\Z}w(p,n')}.
$$
It follows that
$$
\P[\tilde{\n}_p=q-v_ip]:=\frac{w(p,q-v_ip)}{\sum_{n'\in\Z}w(p,n')}.
$$
In view of (\ref{SWsumWpn}) and (\ref{SWsumWpqhp}), we have
\begin{equation}\label{sumPqnpvip}
\sum_{p\in\cP}\P[q=\tilde{\n}_p+v_ip]=\bigg(1+O\bigg(\frac{1}{\log^{10}_{2} x}\bigg)\bigg)\cdot\frac{u}{r}\cdot\frac{x}{2y}
\end{equation}
for any $q\in\cQ$ and $1\leq i\leq r$.
Also, by (\ref{SWsumWpn}), (\ref{SWWpn}), (\ref{SWtau}) and (\ref{defy}), we get that
\begin{equation}\label{Pnpn}
\P[\tilde{\n}_p=n]\ll x^{-\frac12-\frac16+o(1)}
\end{equation}
for each $p\in\cP$ and $n\in\Z$.

Let the random vector $\vec{\a}:=(\a_s)_{s\in\cS}$ be chosen by selecting each $\a_s\in\Z_s$ uniformly at random from $\Z_s\backslash\{h'_i\mod{s}:\,i=1,\dots,m\}$, independently in $s$ and independently of the $\tilde{\n}_p$.
Let $h(s)$ denote the number of residue classes $h'_i\mod s$, $1\leq i\leq m$.
Since $\{h_1',\ldots,h_{k_m}'\}$ is admissible, we always have $h(s)<s$.
Then $S(\vec{\a})$ is a random periodic
subset of $\Z$ with density
$$
 \prod_{s\in\cS}\bigg(1-\frac{1}{s-h(s)}\bigg)\sim\sigma,
$$
where
$$
\sigma:=\prod_{s\in\cS}\bigg(1-\frac{1}{s}\bigg).
$$
According to (\ref{defalpha}), (\ref{setS}) and the prime number theorem (with a sufficiently strong error term), we have
$$
\sigma=\bigg(1+O\bigg(\frac{1}{\log^{10}_2 x}\bigg)\bigg)\cdot\frac{\log(\log^{20}x)}{\log \alpha}=\bigg(1+O\bigg(\frac{1}{\log^{10}_{2} x}\bigg)\bigg)\cdot\frac{80(\log_2 x)^2}{\log x \log_3 x}
$$
In particular, by (\ref{defy}),
\begin{equation}\label{Sigma_y}
\sigma y=\bigg(1+O\bigg(\frac{1}{\log^{10}_{2} x}\bigg)\bigg)\cdot80cx\log_2 x
\end{equation}
We also obtain from (\ref{maxr}) that
\begin{equation}\label{SigmaPower_r}
\sigma^{r}=x^{o(1)}.
\end{equation}

For each $p\in\cP$, let
\begin{equation}\label{Xpa}
X_p(\vec{a}):=\P\big[\tilde{\n}_p\in\cE_{p}\text{ and }\tilde{\n}_p+v_ip\in S(\vec{a})\text{ for any }1\leq i\leq r\big]
\end{equation}
and let $\cP(\vec{a})$ denote the set of all the primes $p\in\cP$ such that
\begin{equation}\label{Xpabound}
X_p(\vec{a})=\big(1+O_{\leq}(\log^{-3} x)\big)\cdot\sigma^r.
\end{equation}
For each $p\in\cP$, define the random variables $\n_p$ as follows. Suppose that the event $\vec{\a}=\vec{a}$ happens for
some $\vec{a}$ in the range of $\vec{\a}$. Set $\n_p = 0$ if $p\in\cP\backslash\cP(\vec{a})$. If $p\in\cP(\vec{a})$, let $\n_p$ to be the random integer with conditional probability distribution
\begin{equation}\label{PZ}
\P[\n_p=n|\,\vec{\a}=\vec{a}]:=\frac{Z_{p}(\vec{a};n)}{X_p(\vec{a})},
\end{equation}
where
$$
Z_p(\vec{a};n)=\1_{\substack{n+v_ip\in S(\vec{a})\\ n\in\cE_{p}}}\cdot\P[\tilde{\n}_p=n].
$$
Note that here those random variables $\n_p$ for all $p\in\cP(\vec{a})$ are jointly independent and conditionally on the event $\vec{\a}=\vec{a}$.
 From (\ref{Xpa}) we see that these
random variables are well defined.

Clearly the first assertion of Proposition \ref{RanCon} easily follows from  (\ref{Pnpn}), (\ref{PZ}) and (\ref{Xpa}). 
\begin{lemma}\label{FGKMTcorrbound}
Let $t\leq \log x$ be a natural number with $|n_1|,\ldots,|n_t|\leq y$. Then
$$
\P\big[n_1,\dots,n_t\in S(\vec{\a})\big]=\big(1+O(\log^{-16} x)\big)\cdot\sigma^t.
$$
\end{lemma}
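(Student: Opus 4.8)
The plan is to use the independence of the coordinates $\a_s$ ($s\in\cS$) of $\vec{\a}$, reduce the event to a product of local conditions modulo each $s$, and then compare the resulting product with $\sigma^t$; this is, for our set $S(\vec{\a})$, the analogue of the correlation estimates of \cite{FGKMT17}. At the outset I would record that we may assume $n_1,\dots,n_t$ to be pairwise distinct and distinct from each $h_j'$: this holds in the applications, and some such hypothesis is in any case needed (for instance if $n_1=\dots=n_t=0$ the left-hand side is $\prod_{s\in\cS}(1-1/(s-h(s)))\sim\sigma$ rather than $\sigma^t$). Since each $\a_s$ is uniform on the $s-h(s)$ residues of $\Z_s$ outside $H_s:=\{h_i'\bmod s:1\le i\le k_m\}$, chosen independently over $s$, the first step is the identity
$$
\P\big[n_1,\dots,n_t\in S(\vec{\a})\big]=\prod_{s\in\cS}\Big(1-\frac{m_s}{s-h(s)}\Big),
$$
where $m_s$ is the number of classes of $\Z_s\setminus H_s$ hit by some $n_j$; one has $0\le m_s\le t$, with $m_s=t$ precisely when the $n_j\bmod s$ are distinct and avoid $H_s$.

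The second step is to isolate the primes $s\in\cS$ for which $m_s=t$ fails. I would call $s$ \emph{bad} when it divides one of the $\binom{t}{2}+tk_m=O(\log^2x)$ nonzero integers $n_j-n_{j'}$ ($j<j'$) and $n_j-h_i'$ ($1\le j\le t$, $1\le i\le k_m$); each of these has absolute value at most $2y$ and hence at most $\log(2y)/\log(\log^{20}x)\ll\log x/\log_2x$ prime factors exceeding $\log^{20}x$ (recall $\log y\sim\log x$ by (\ref{defy})), and every element of $\cS$ exceeds $\log^{20}x$, so there are only $O(\log^3x/\log_2x)$ bad primes; for every other ("generic") $s$ one has $m_s=t$.

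The third step is to take logarithms and estimate
$$
\log\frac{\P[n_1,\dots,n_t\in S(\vec{\a})]}{\sigma^t}=\sum_{s\in\cS}\Big(\log\Big(1-\frac{m_s}{s-h(s)}\Big)-t\log\Big(1-\frac1s\Big)\Big)
$$
term by term. For a generic $s$ I would set $m_s=t$ and Taylor-expand both logarithms (legitimate since $t/s\le\log^{-19}x$ and $h(s)\le k_m$): the linear parts combine to $t(\tfrac1s-\tfrac1{s-h(s)})=O(1/s^2)$ and the remainder is $O(t^2/s^2)$, so the generic summands contribute $O\big(t^2\sum_{s>\log^{20}x}s^{-2}\big)=O(\log^{-18}x)$. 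For a bad $s$ I would only use $0\le m_s\le t$ and $s>\log^{20}x$ to bound the summand by $O(t/s)=O(\log^{-19}x)$; multiplied by the $O(\log^3x/\log_2x)$ bad primes this gives $O(\log^{-16}x)$. Hence the displayed logarithm is $O(\log^{-16}x)$, and exponentiating yields the stated estimate.

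Nothing here is deep: the two places that need genuine care are the counting of bad primes in the second step (using that each difference $n_j-n_{j'}$ or $n_j-h_i'$ is a nonzero integer below $2y$, hence divisible by few primes above $\log^{20}x$) and the check that the crude per-term bound applied over the $O(\log^3x/\log_2x)$ bad primes is still $O(\log^{-16}x)$ — and it is this last point, rather than the generic primes (which alone give the much smaller $O(\log^{-18}x)$), that pins down the exponent $16$ in the error term.
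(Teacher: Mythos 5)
Your proof is correct and follows essentially the same route as the paper's: isolate the $O(\log^3 x)$ primes $s\in\cS$ at which the residues of the $n_j$ collide with one another or with the $h_i'$, treat the generic primes by comparing $1-t/(s-h(s))$ with $(1-1/s)^t$ to get an $O(\log^{-18}x)$ contribution, and let the exceptional primes (each contributing $O(t/s)=O(\log^{-19}x)$) account for the exponent $16$. Your explicit remark that the $n_j$ must be assumed pairwise distinct and distinct from the $h_i'$ is a hypothesis the paper leaves implicit but does rely on, so that caveat is a welcome addition rather than a deviation.
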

\begin{proof}
Clearly
$$
\P[\a_s\not\equiv  n_1,\dots,n_t\mod s]\geq\frac{s-h(s)-t}{s-h(s)}.
$$
For each $s\in\cS$, clearly the integers $n_1,\dots,n_t$ will occupy $t$ distinct residue classes modulo $s$, unless $s$ divides $n_i-n_j$ for some $1\leq i < j \leq t$. Since $s \geq \log^{20} x$ and $|n_i-n_j|\leq 2x^2$, we have
$$
\big|\{s\in\cS:\,n_i\equiv n_j\pmod{s}\text{ for some }1\leq i<j\leq t\}\big|=O(t^2 \log x) = O(\log^{3} x).
$$
Similarly,
$$
\big|\{s\in\cS:\,n_i\equiv h_j'\pmod{s}\text{ for some }1\leq i\leq t\text{ and }1\leq j\leq m\}\big|= O(m\log^{2} x).
$$
Thus except for $O(\log^{3} x)$ values of $s$,
$$
\P[\a_s\not\equiv  n_1,\dots,n_t\mod s]=\frac{s-h(s)-t}{s-h(s)}.
$$
Since the choice for those components $a_s$ are independent, we have
\begin{align*}
\P\big[n_1,\dots,n_t\in S(\vec{\a})\big]= &
\bigg(1+O\bigg(\frac{t}{s}\bigg)\bigg)^{O(\log^3 x )}
\prod_{s\in \cS}\bigg(1-\frac{t}{s-h(s)}\bigg).
\end{align*}
Note that
\begin{align*}
\prod_{s\in \cS}\bigg(1-\frac{t}{s-h(s)}\bigg)\bigg(1-\frac{1}{s}\bigg)^{-t}=&\prod_{s\in \cS}\exp\bigg(O\bigg(\frac{t^2+mt}{s^2}\bigg)\bigg)\\
=&1+O\bigg(\sum_{s\in\cS}\frac{t^2+mt}{s^2}\bigg)=1+O\bigg(\frac{1}{\log^{18}x}\bigg).
\end{align*}
It follows that
\begin{align*}
\P\big[n_1,\dots,n_t\in S(\vec{\a})\big]=&
\bigg(1+O\bigg(\frac{t\log^3 x}{s}\bigg)\bigg)\cdot \bigg(1+O\bigg(\frac{1}{\log^{18}x}\bigg)\bigg)
\prod_{s\in \cS}\bigg(1-\frac{1}{s}\bigg)^{t}\\
=&\big(1+O(\log^{-16} x)\big)\cdot\sigma^t.
\end{align*}
\end{proof}

Let us introduce some basic inequalities in Probability Theory
\begin{itemize}
\item Markov's inequality
\begin{equation}\label{Markov}
\P[X\geq \lambda]\leq\frac{\mu}{\lambda},\quad X>0,\mu= \E[X],\lambda>0
\end{equation}
\item Chebyshev's inequality
\begin{equation}\label{Chebyshev}
\P[|X>\mu|\geq\lambda\sqrt{\E|X-\mu|^2}]\leq\frac{1}{\lambda^2},\quad \lambda>0,\quad\mu=\E X\in\R,\quad\E|X-\mu|^2>0
\end{equation}
\end{itemize}

\begin{lemma}[modified Chebyshev's inequality]\label{Mchebyshev}
Suppose that for some $A > 0$ and $0 < \epsilon < 1$ we have
$$
\mu=\E[X]=A(1+O_{\leq}(\epsilon)),\quad \E X^2=A^2(1+O_{\leq}(\epsilon)).
$$
Then, for any $\delta>\epsilon $ we have
$$
\P[|X-A|\geq\delta A]\leq\frac{4\epsilon}{(\delta-\epsilon)^2}
$$
\end{lemma}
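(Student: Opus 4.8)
The plan is to deduce the statement from the ordinary Chebyshev inequality applied around the \emph{true} mean $\mu:=\E[X]$, using the two hypotheses only to bound the variance of $X$ and to control the distance from $\mu$ to $A$. First I would extract the elementary consequences of the hypotheses: writing $\E[X]=A(1+O_{\leq}(\epsilon))$ and $\E[X^2]=A^2(1+O_{\leq}(\epsilon))$ explicitly gives $|\mu-A|\le\epsilon A$, and, since $0<\epsilon<1$ forces $\mu\ge A(1-\epsilon)>0$,
$$
\mathrm{Var}(X)=\E[X^2]-\mu^2\le A^2(1+\epsilon)-A^2(1-\epsilon)^2=(3\epsilon-\epsilon^2)A^2\le 4\epsilon A^2 .
$$
(Any constant $\le 4$ would do here; the slack is harmless.)

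Next I would apply Chebyshev's inequality (\ref{Chebyshev}) --- equivalently Markov's inequality (\ref{Markov}) to the non-negative variable $(X-\mu)^2$, which sidesteps the degenerate case $\mathrm{Var}(X)=0$ --- to obtain, for every $s>0$,
$$
\P\big[|X-\mu|\ge s\big]\le\frac{\mathrm{Var}(X)}{s^2}\le\frac{4\epsilon A^2}{s^2}.
$$
To pass from $\mu$ back to $A$ I would invoke the triangle inequality: since $\delta>\epsilon$ and $|\mu-A|\le\epsilon A$, the event $|X-A|\ge\delta A$ is contained in the event $|X-\mu|\ge|X-A|-|A-\mu|\ge(\delta-\epsilon)A>0$. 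Taking $s=(\delta-\epsilon)A$ in the previous display then yields
$$
\P\big[|X-A|\ge\delta A\big]\le\P\big[|X-\mu|\ge(\delta-\epsilon)A\big]\le\frac{4\epsilon A^2}{(\delta-\epsilon)^2 A^2}=\frac{4\epsilon}{(\delta-\epsilon)^2},
$$
which is the assertion.

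There is no genuine obstacle here: the statement is a soft variant of Chebyshev, and the only points needing a little care are the bookkeeping of the $O_{\leq}$-terms in the variance bound (so that the final constant stays $\le 4$) and the remark that the hypothesis $\delta>\epsilon$ is exactly what makes $(\delta-\epsilon)A$ a legitimate positive threshold. Alternatively one could skip the detour through $\mu$ and apply Markov directly to $(X-A)^2$, since $\E[(X-A)^2]=\E[X^2]-2A\E[X]+A^2\le 3\epsilon A^2$ by the same computation; this gives the even cleaner bound $\P[|X-A|\ge\delta A]\le 3\epsilon/\delta^2$, which a fortiori implies the stated inequality. I would present whichever of the two matches the way the lemma is invoked later (the $(\delta-\epsilon)^2$ in the denominator suggests the mean-centred route above).
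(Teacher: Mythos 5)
Your proof is correct: the variance bound $\mathrm{Var}(X)\le(3\epsilon-\epsilon^2)A^2\le 4\epsilon A^2$, the Chebyshev step centred at $\mu$, and the triangle-inequality shift from $\mu$ to $A$ (which is exactly where the hypothesis $\delta>\epsilon$ is used) together give the stated bound. The paper itself offers no argument here---it simply cites \cite[Lemma 2.1]{FGKMT17}---and your write-up is essentially the standard proof of that lemma, so nothing further is needed.
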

\begin{proof} See \cite[Lemma 2.1]{FGKMT17}.
\end{proof}

\begin{lemma}\label{estQjSa}
With the probability $1-o(1)$, we have
\begin{equation}\label{QjSabound}
\big|\cQ\cap S(\vec{\a})\big|=\big(1+O(\log_2^{-10} x)\big)\cdot\frac{\sigma y}{\log x}.
\end{equation}
\end{lemma}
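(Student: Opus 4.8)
The plan is to establish (\ref{QjSabound}) by a second-moment argument for the random variable $X:=|\cQ\cap S(\vec{\a})|$, which depends only on $\vec{\a}$, followed by an application of the modified Chebyshev inequality, Lemma \ref{Mchebyshev}. The bulk of the work is therefore to evaluate $\E[X]$ and $\E[X^2]$ with relative error $o(\log_2^{-20}x)$.

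For the first moment I would write $X=\sum_{q\in\cQ}\1_{q\in S(\vec{\a})}$, so that by linearity $\E[X]=\sum_{q\in\cQ}\P[q\in S(\vec{\a})]$, and each probability is the $t=1$ instance of Lemma \ref{FGKMTcorrbound} (applicable since $q\leq y$ and $1\leq\log x$), giving $\P[q\in S(\vec{\a})]=(1+O(\log^{-16}x))\sigma$. A short computation with the prime number theorem in a sufficiently strong form, using $\log y=\log x+O(\log_2 x)$ and $x/y=\Theta(\log_2 x/(\log x\log_3 x))\to 0$ (so the $\pi(x)$ term is of lower order), yields $|\cQ|=\pi(y)-\pi(x)=(1+O(\log_2 x/\log x))\,y/\log x$. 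Hence $\E[X]=(1+O(\log_2 x/\log x))\,\sigma y/\log x$.

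For the second moment I would expand $X^2=\sum_{q_1,q_2\in\cQ}\1_{q_1,q_2\in S(\vec{\a})}$ and split off the diagonal $q_1=q_2$, whose total contribution is exactly $\E[X]=O(\sigma y/\log x)$. For the off-diagonal terms $q_1\neq q_2$, Lemma \ref{FGKMTcorrbound} with $t=2$ and the two distinct arguments $q_1,q_2$ gives $\P[q_1,q_2\in S(\vec{\a})]=(1+O(\log^{-16}x))\sigma^2$, so the off-diagonal sum equals $(1+O(\log^{-16}x))\sigma^2(|\cQ|^2-|\cQ|)=(1+O(\log_2 x/\log x))(\sigma y/\log x)^2$. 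Since $\sigma y/\log x\asymp cx\log_2 x/\log x\to\infty$ by (\ref{Sigma_y}), the diagonal contribution is negligible against the square of the main term, and so $\E[X^2]=(1+O(\log_2 x/\log x))(\sigma y/\log x)^2$.

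Finally I would set $A:=\sigma y/\log x$ and $\epsilon:=C\log_2 x/\log x$ for a suitable absolute constant $C$, so that $\E[X]=A(1+O_{\leq}(\epsilon))$ and $\E[X^2]=A^2(1+O_{\leq}(\epsilon))$; this is legitimate for $x$ large, since then $\epsilon<1$. Taking $\delta:=\log_2^{-10}x$, one has $\delta>\epsilon$ for $x$ large, so Lemma \ref{Mchebyshev} gives $\P[|X-A|\geq\delta A]\leq 4\epsilon/(\delta-\epsilon)^2\ll\log_2^{21}x/\log x=o(1)$, and with probability $1-o(1)$ we conclude $X=(1+O(\log_2^{-10}x))\,\sigma y/\log x$, which is (\ref{QjSabound}). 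I expect the only delicate point to be the bookkeeping of error terms: one must confirm that the $O(\log_2 x/\log x)$ error from the prime number theorem dominates both the $O(\log^{-16}x)$ correlation errors from Lemma \ref{FGKMTcorrbound} and the $O(1/A)$ diagonal contribution, and that all of them are $o(\delta^2)=o(\log_2^{-20}x)$, so that the Chebyshev estimate is non-trivial; there is no genuine analytic obstacle.
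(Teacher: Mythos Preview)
Your proposal is correct and follows essentially the same approach as the paper: compute the first and second moments of $|\cQ\cap S(\vec{\a})|$ via Lemma \ref{FGKMTcorrbound} with $t=1,2$, then apply the modified Chebyshev inequality (Lemma \ref{Mchebyshev}) with $\delta=\log_2^{-10}x$. The only cosmetic difference is that the paper first compares $|\cQ\cap S(\vec{\a})|$ to $\sigma|\cQ|$ (so the $\epsilon$ in Lemma \ref{Mchebyshev} is of size $\log^{-16}x$) and invokes $|\cQ|\sim y/\log x$ at the very end, whereas you fold the prime number theorem error into $\epsilon$ from the start; this changes nothing of substance.
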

\begin{proof}
Applying Lemma \ref{FGKMTcorrbound} with $t=1$ and $2$, we can respectively get that
$$
\E\big[|\cQ\cap S(\vec{\a})|\big]=\big(1+O(\log^{-16} x)\big)\sigma\cdot|\cQ|
$$
and
$$
\E\big[|\cQ\cap S(\vec{\a})|^2\big]=\big(1+O(\log^{-16} x)\big)\cdot\big(\sigma\cdot |\cQ|+\sigma^2\cdot|\cQ|(|\cQ|-1)\big).
$$
By Lemma \ref{Mchebyshev}, we have
$$
\P\big[\big||\cQ\cap S(\vec{a})|-\sigma|\cQ|\big|\geq\log_2^{-10} x\sigma|\cQ|\big]=o\big(\log^{-16} x\big).
$$
And $|\cQ|\sim y\log^{-1} x$ by the prime number theorem. Thus the proof of Lemma \ref{estQjSa} is complete.
\end{proof}
Thus (II) of Proposition \ref{RanCon} is an immediate consequence of (\ref{defy})  and (\ref{QjSabound}).

\section{Proof of (III) of Proposition \ref{RanCon}}
\setcounter{lemma}{0}
\setcounter{corollary}{0}
\setcounter{remark}{0}
\setcounter{equation}{0}
\setcounter{conjecture}{0}
\setcounter{definition}{0}
\setcounter{proposition}{0}

\begin{lemma}\label{FGKMTlem63}
With the probability $1-O(\log^{-3} x)$, $\cP(\vec{\a})$ contains all but $O(x\log^{-4} x)$ of the primes $p\in\cP$. In particular, $$\E\big[|\cP(\vec{\a})|\big]=|\cP|\cdot\big(1+O(\log^{-3}x)\big).$$
\end{lemma}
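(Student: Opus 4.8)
The plan is to show that $X_p(\vec{\a})$ concentrates around its mean $\sigma^r$ for almost every $p\in\cP$, and then deduce the claim by a Markov-type argument. First I would compute the first and second moments of $X_p(\vec{\a})$ over the randomness of $\vec{\a}$, for each fixed $p\in\cP$. By definition, $X_p(\vec{a})=\sum_n Z_p(\vec{a};n)$ where the sum is over $n\in\cE_p$ with $|n|\leq y$ (the support of $w$), so
$$
\E\big[X_p(\vec{\a})\big]=\sum_{\substack{n\in\cE_p\\|n|\leq y}}\P[\tilde{\n}_p=n]\cdot\P\big[n+v_ip\in S(\vec{\a})\text{ for all }1\leq i\leq r\big].
$$
Since $r\leq\log^{\eta_0}x\leq\log x$ and $|n+v_ip|\leq y+2r^2x=O(y)\leq 2x^2$, Lemma \ref{FGKMTcorrbound} (applied with the $t\leq r$ distinct values among $n+v_1p,\dots,n+v_rp$; note $v_1,\dots,v_r$ distinct and $p\neq 0$ forces these distinct) gives that this inner probability is $(1+O(\log^{-16}x))\sigma^r$, uniformly. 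Summing against the probability density of $\tilde{\n}_p$, and using that $\P[\tilde{\n}_p\in\cE_p]=1+O(\cdots)$ — more precisely that the $n\notin\cE_p$ contribute negligibly, which follows since $\cE_p$ excludes only $k_m=O(1)$ residue classes mod $p$ and $w(p,\cdot)$ is spread over an interval of length $\asymp y\gg p$ — I get $\E[X_p(\vec{\a})]=(1+O(\log^{-16}x))\sigma^r$.

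Next I would estimate $\E[X_p(\vec{\a})^2]$. Expanding the square,
$$
\E\big[X_p(\vec{\a})^2\big]=\sum_{\substack{n,n'\in\cE_p\\|n|,|n'|\leq y}}\P[\tilde{\n}_p=n]\,\P[\tilde{\n}_p=n']\cdot\P\big[n+v_ip,\ n'+v_ip\in S(\vec{\a})\ (1\leq i\leq r)\big].
$$
Here the relevant set of shifts has size $t\leq 2r\leq\log x$, so Lemma \ref{FGKMTcorrbound} again applies and the inner probability is $(1+O(\log^{-16}x))\sigma^{|I|}$ where $I=I(n,n')$ is the set of \emph{distinct} values among the $2r$ shifts; generically $|I|=2r$, and the diagonal-type terms where $|I|<2r$ (i.e. $n-n'=(v_i-v_j)p$ for some $i\neq j$, or $n=n'$) are controlled because, for fixed $p$, each such coincidence pins down $n-n'$ to $O(r^2)$ values, and then using the pointwise bound $\P[\tilde{\n}_p=n]\ll x^{-1/2-1/6+o(1)}$ from (\ref{Pnpn}) together with $\sigma^{-r}=x^{o(1)}$ from (\ref{SigmaPower_r}), these terms are $O(x^{-1/3+o(1)})=o(\sigma^{2r}\log^{-16}x)$. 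Hence $\E[X_p(\vec{\a})^2]=(1+O(\log^{-16}x))\sigma^{2r}$.

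Now I would apply the modified Chebyshev inequality (Lemma \ref{Mchebyshev}) with $A=\sigma^r$, $\epsilon\asymp\log^{-16}x$ and $\delta\asymp\log^{-3}x$: it yields
$$
\P\big[X_p(\vec{\a})\neq(1+O_{\leq}(\log^{-3}x))\sigma^r\big]=\P\big[|X_p(\vec{\a})-\sigma^r|\geq\tfrac12\log^{-3}x\cdot\sigma^r\big]=O(\log^{-10}x),
$$
say, for each $p\in\cP$. In other words, $\P[p\notin\cP(\vec{\a})]=O(\log^{-10}x)$ for every individual $p$. Summing over $p\in\cP$ and using $|\cP|\ll x/\log x$ by the prime number theorem,
$$
\E\big[\,|\cP\setminus\cP(\vec{\a})|\,\big]=\sum_{p\in\cP}\P[p\notin\cP(\vec{\a})]=O\big(|\cP|\log^{-10}x\big)=O\big(x\log^{-11}x\big).
$$
By Markov's inequality (\ref{Markov}), $\P[\,|\cP\setminus\cP(\vec{\a})|\geq x\log^{-4}x\,]=O(\log^{-7}x)=O(\log^{-3}x)$, which is the first assertion; the displayed expectation statement $\E[|\cP(\vec{\a})|]=|\cP|(1+O(\log^{-3}x))$ follows immediately from the bound on $\E[|\cP\setminus\cP(\vec{\a})|]$ since $x\log^{-11}x=O(|\cP|\log^{-3}x)$.

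I expect the main obstacle to be the careful bookkeeping in the second-moment computation — specifically, verifying that the ``near-diagonal'' pairs $(n,n')$ with $n-n'\in\{(v_i-v_j)p: i\neq j\}\cup\{0\}$ contribute only $o(\sigma^{2r}\log^{-16}x)$. This is where one genuinely uses the two a priori bounds peculiar to this construction, namely the pointwise density bound (\ref{Pnpn}) (ultimately coming from (\ref{SWWpn})) and $\sigma^r=x^{o(1)}$; everything else is a routine application of Lemma \ref{FGKMTcorrbound} and Lemma \ref{Mchebyshev}, mirroring Lemma 6.3 of \cite{FGKMT17}.
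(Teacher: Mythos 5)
Your proposal is correct and follows essentially the same route as the paper: first and second moments of $X_p(\vec{\a})$ via Lemma \ref{FGKMTcorrbound}, with the near-diagonal pairs controlled by (\ref{Pnpn}) and $\sigma^{-r}=x^{o(1)}$, then the modified Chebyshev inequality for each fixed $p$, and finally linearity of expectation plus Markov's inequality. Your bookkeeping at the Chebyshev step ($O(\log^{-10}x)$ per prime) is in fact slightly sharper than the paper's stated $O(\log^{-6}x)$, but both suffice for the conclusion.
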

\begin{proof}
By Lemma \ref{FGKMTcorrbound}, we have
\begin{align*}
\E\big[X_p(\vec{\a})\big]=&\sum_{n_p}\E\big[X_p(\vec{\a})|\,\tilde{\n}_p=n_p\big]\cdot\P[\tilde{\n}_p=n_p]\\
=&\big(1+O(\log^{-16} x)\big)\cdot \sigma^r\sum_{n_p\in\cE_{p}}\P[\tilde{\n}_p=n_p].
\end{align*}
And according to (\ref{Pnpn}),
$$
\sum_{n_p\in\cE_{p}}\P[\tilde{\n}_p=n_p]=1-O\bigg(\frac{y}{p}\bigg)\cdot x^{-\frac12-\frac16+o(1)}=1-O\big(x^{-\frac12-\frac16+o(1)}\big).
$$
Hence we get that
\begin{equation}\label{EXp}
\E\big[X_p(\vec{\a})\big]=\P[\tilde{\n}_p+v_ip\in S(\vec{\a})\text{ for each }1\leq i\leq r]=\big(1+O(\log^{-16} x)\big)\cdot \sigma^{r}.
\end{equation}
Similarly, by Lemma \ref{FGKMTcorrbound}, we also have
$$
\E\big[X_p(\vec{\a})^2\big]=\big(1+O(\log^{-16} x)\big)\sum_{n^{(1)}_p,n^{(2)}_p\in\cE_{p}}\sigma^{|R_{n^{(1)}_p,n^{(2)}_p}|}\P[\tilde{\n}_p^{(1)}=n^{(1)}_p\text{ and }\tilde{\n}_p^{(2)}=n^{(2)}_p],
$$
where $\tilde{\n}_p^{(1)}, \tilde{\n}_p^{(2)}$ are independent copies of $\n_p$  that are also independent of $\vec{\a}$, and
$$
R_{n^{(1)}_p,n^{(2)}_p}=\{\n^{(l)}_p+v_ip:\,1\leq i\leq r,\ l=1,2\}.
$$
Evidently $|R_{n^{(1)}_p,n^{(2)}_p}|\neq 2r$ if and only if $n^{(1)}_p-n^{(2)}_p=(v_i-v_j)p$ for some $1\leq i,j\leq r$. By (\ref{Pnpn}),
\begin{align*}
&\sum_{\substack{n^{(1)}_p,n^{(2)}_p\in\cE_{p}\\
|R_{n^{(1)}_p,n^{(2)}_p}|\neq 2r
}}\P[\tilde{\n}_p^{(1)}=n^{(1)}_p\text{ and }\tilde{\n}_p^{(2)}=n^{(2)}_p]\\
=&\sum_{\substack{n^{(1)}_p,n^{(2)}_p\in\cE_{p}\\
1\leq j,k\leq r}}
\P[\tilde{\n}_p^{(1)}=n^{(1)}_p]\P\big[\tilde{\n}_p^{(2)}=n^{(1)}+(v_j-v_k)p\big]\\
=&O(x^{-\frac12-\frac16+o(1)}).
\end{align*}
While
\begin{align*}
&\sum_{n^{(1)}_p,n^{(2)}_p\in\cE{p}}\P[\tilde{\n}_p^{(1)}=n^{(1)}_p\text{ and }\tilde{\n}_p^{(2)}=n^{(2)}_p]\\
=&\bigg(\sum_{n_p\in\cE_{p}}\P[\tilde{\n}_p=n_p]\bigg)^2
=1-O(x^{-\frac12-\frac16+o(1)}\big).
\end{align*}
Since $\sigma^r=x^{o(1)}$, we get
\begin{equation}\label{VarXp}
\E\big[X^2_p(\vec{\a})\big]=\big(1+O(\log^{-16} x)\big)\cdot\sigma^{2r}.
\end{equation}

For each $p\in\cP$, from the (\ref{Xpa}), we know that
$$
p\in \cP(\vec{a})\Longleftrightarrow |X_p(\vec{a})-\sigma^r|\leq O_{\leq}\big(\log^{-3} x\big)\cdot\sigma^r,
$$
Thus by Lemma \ref{Mchebyshev}, we have
$$
\P\big[|X_p(\vec{\a})-\sigma^r|\leq O_{\leq}(\log^{-3} x)\cdot \sigma^r)\big]=1-O\big(\log^{-6} x\big).
$$
That is, $p$ belongs to $\cP(\vec{\a})$ with probability $1-O(\log^{-6} x)$ for each $p\in\cP$.
Then with help of the linearity of expectation, we have
$$
\E\big[|\cP(\vec{\a})|\big]=\E\bigg[\sum_{p\in\cP}\1_{p\in\cP(\vec{\a})}\bigg]=\sum_{p\in\cP}\E\big[\1_{p\in\cP(\vec{\a})}\big]=|\cP|\cdot\big(1+O(\log^{-6} x)\big).
$$
Since $\cP(\vec{\a})\subseteq\cP$ and $|\cP|\asymp x/\log x$, by Markov's inequality
$$
\P\bigg[|\cP|-|\cP(\vec{\a})|\geq \frac{x}{\log^{4} x}\bigg]\leq \frac{\E(|\cP|-|\cP(\vec{\a})|}{x\log^{-4} x}=O\bigg(\frac{1}{\log^{3} x}\bigg).
$$
\end{proof}
\begin{lemma} For each $1\leq i\leq r$,
\begin{equation}\label{FGKMTpf62620}
\E\bigg[\sum_n\sigma^{-r}\sum_{p\in\cP\backslash\cP(\vec{\a})}Z_p(\vec{\a};n)\bigg]=o\bigg(\frac{u}{\sigma}\cdot\frac{x}{2y}\cdot\frac{1}{r\log^3_2x\log^2_2 x}\bigg).
\end{equation}
\end{lemma}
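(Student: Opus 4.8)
Fix $1\le i\le r$. The plan is to move the expectation inside the finite sums over $p$ and $n$, and to estimate, for each $p\in\cP$ separately, the single--prime average $\E\big[\1_{p\notin\cP(\vec{\a})}\,W_p(\vec{\a})\big]$, where $W_p(\vec{\a}):=\sum_n Z_p(\vec{\a};n)$ is the inner sum of (\ref{FGKMTpf62620}) (so $n$ runs over $q-v_ip$, $q\in\cQ$; $W_p(\vec a)$ is a sub--sum of $X_p(\vec a)=\sum_n Z_p(\vec a;n)$, and $0\le W_p(\vec a)\le X_p(\vec a)$). By Lemma \ref{FGKMTcorrbound} applied to the $r$--tuple $\{q+(v_j-v_i)p\}_{j=1}^r$, one has $\E[W_p(\vec{\a})]\le\big(1+O(\log^{-16}x)\big)\sigma^r\sum_{q\in\cQ}\P[\tilde{\n}_p=q-v_ip]$, and summing over $p$ together with (\ref{sumPqnpvip}) gives $\sum_{p\in\cP}\E[W_p(\vec{\a})]\le\big(1+o(1)\big)\sigma^r\,|\cQ|\cdot\frac{u}{r}\cdot\frac{x}{2y}$. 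The point is that this main term carries the extra factor $u/r=o(1)$ compared with the one for $X_p$, so a rather crude estimate for $W_p$ on the rare set $\cP\setminus\cP(\vec{\a})$ will suffice.

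The first step is a decoupling. Since $\1_{p\notin\cP(\vec{\a})}$ and the sieve--survival events $\{n+v_jp\in S(\vec{\a})\}$ entering $Z_p(\vec{\a};n)$ are both functions of the same random vector $\vec{\a}$, the expectation does not factor, so one invokes Cauchy--Schwarz:
$$
\E\big[\1_{p\notin\cP(\vec{\a})}\,W_p(\vec{\a})\big]\le\big(\P[p\notin\cP(\vec{\a})]\big)^{1/2}\big(\E[W_p(\vec{\a})^2]\big)^{1/2}.
$$
By the definition (\ref{Xpa})--(\ref{Xpabound}) of $\cP(\vec a)$, the event $\{p\notin\cP(\vec{\a})\}$ equals $\{|X_p(\vec{\a})-\sigma^r|>\log^{-3}x\cdot\sigma^r\}$, which by the first and second moment bounds (\ref{EXp}), (\ref{VarXp}) and Lemma \ref{Mchebyshev} has probability $O(\log^{-6}x)$ --- precisely the estimate proved inside Lemma \ref{FGKMTlem63}. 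Hence the first factor is $O(\log^{-3}x)$.

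The second step is the second moment $\E[W_p(\vec{\a})^2]$. Expanding into a double sum over $q,q'\in\cQ$ and applying Lemma \ref{FGKMTcorrbound} to the union of the shifted $r$--tuples $\{q+(v_j-v_i)p\}_{j=1}^r$ and $\{q'+(v_j-v_i)p\}_{j=1}^r$ (entries $O(y)$, total size $\le 2r\le\log x$), the ``generic'' terms, where the two tuples are disjoint, contribute $\big(1+O(\log^{-16}x)\big)\sigma^{2r}\big(\sum_{q\in\cQ}\P[\tilde{\n}_p=q-v_ip]\big)^2$, while the diagonal terms ($q=q'$) and the collision terms (where $q-q'=(v_k-v_j)p$, of which there are $O(r^2|\cQ|)$) total $O\big(\sigma^{r}x^{-\frac12-\frac16+o(1)}\big)$ by the pointwise bound (\ref{Pnpn}), $r^2=x^{o(1)}$, and $\sigma^r=x^{o(1)}$ from (\ref{SigmaPower_r}); this is exactly the bookkeeping used for (\ref{VarXp}). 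Thus $\big(\E[W_p(\vec{\a})^2]\big)^{1/2}=(1+o(1))\sigma^r\sum_{q\in\cQ}\P[\tilde{\n}_p=q-v_ip]+O\big(\sigma^{r/2}x^{-\frac13+o(1)}\big)$. Combining this with the $O(\log^{-3}x)$ bound, summing over $p\in\cP$, and using (\ref{sumPqnpvip}), $|\cQ|\sim y/\log x$ and $|\cP|\asymp x/\log x$, one finds that $\E\big[\sum_n\sigma^{-r}\sum_{p\in\cP\setminus\cP(\vec{\a})}Z_p(\vec{\a};n)\big]$ is $O(\log^{-3}x)$ times the corresponding quantity obtained by replacing $\cP\setminus\cP(\vec{\a})$ by $\cP$, plus a negligible term of size $x^{\frac23+o(1)}$; since $\log^{-3}x$ beats every fixed power of $\log_2 x$, a direct comparison using (\ref{sumPqnpvip}) and (\ref{Sigma_y}) shows this is $o(\cdot)$ of the bound stated in (\ref{FGKMTpf62620}).

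The step I expect to be the main obstacle is this combination of the Cauchy--Schwarz decoupling with the second moment. The correlation between $\{p\notin\cP(\vec{\a})\}$ and the sieve--survival events is genuine, so one really needs $\{p\notin\cP(\vec{\a})\}$ to be rarer than a power of $\log x$, which is the substance of the second--moment estimate (\ref{VarXp}); and the collision bookkeeping for $\E[W_p(\vec{\a})^2]$ has to be carried out with the same care as for (\ref{VarXp}), the key input being that every sieve modulus $s\in\cS$ exceeds $\log^{20}x$, so that the two shifted $r$--tuples occupy $2r$ distinct residue classes modulo $s$ outside a negligible set of $s$.
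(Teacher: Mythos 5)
Your argument is correct in substance but takes a genuinely different route from the paper's. The paper proves (\ref{FGKMTpf62620}) by a subtraction argument: since $\sum_n Z_p(\vec{a};n)=X_p(\vec{a})$, Lemma \ref{FGKMTcorrbound} gives $\E\big[\sum_n\sigma^{-r}\sum_{p\in\cP}Z_p(\vec{\a};n)\big]=(1+O(\log^{-16}x))\,|\cP|$, while the defining property (\ref{Xpabound}) of $\cP(\vec{a})$ combined with $\E[|\cP(\vec{\a})|]=(1+O(\log^{-3}x))\,|\cP|$ from Lemma \ref{FGKMTlem63} gives $\E\big[\sum_n\sigma^{-r}\sum_{p\in\cP(\vec{\a})}Z_p(\vec{\a};n)\big]=(1+O(\log^{-3}x))\,|\cP|$; subtracting yields $O(|\cP|\log^{-3}x)=O(x\log^{-4}x)$ with no decoupling and no second moment of the inner sum. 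You instead decouple $\1_{p\notin\cP(\vec{\a})}$ from $W_p(\vec{\a})$ by Cauchy--Schwarz and pay for it with a second-moment computation for $W_p$; this is valid (your collision bookkeeping is the same as for (\ref{VarXp}), and $\P[p\notin\cP(\vec{\a})]=O(\log^{-6}x)$ is indeed what the proof of Lemma \ref{FGKMTlem63} establishes), and it is more robust in that it does not need the exact identity $\sum_nZ_p=X_p$, so it applies directly to the sub-sum over $n=q-v_ip$, $q\in\cQ$, gaining the harmless extra factor $u/r$. Both arguments rest on the same two inputs, Lemma \ref{FGKMTcorrbound} and the rarity of $\cP\setminus\cP(\vec{\a})$. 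One caveat, which applies equally to the paper's own proof: what either argument actually yields is a bound of shape $O(x\log^{-4}x)$ (or your slightly smaller $O(ux/(r\log^4x))$) for the unnormalized expectation, and this is \emph{not} $o\big(\frac{u}{\sigma}\cdot\frac{x}{2y}\cdot\frac{1}{r\log_2^{5}x}\big)$, a quantity which is $o(1)$ since $\frac{u}{\sigma}\cdot\frac{x}{2y}\asymp c_0^{-1}$. The comparison only closes after dividing by $|\cQ\cap S(\vec{\a})|\asymp\sigma y/\log x$, which is exactly how the lemma is invoked in the proof of Lemma \ref{FGKMTlem62}. So your concluding ``direct comparison'' sentence, like the paper's ``immediately follows,'' must be read modulo that normalization of the left-hand side.
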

\begin{proof}
First, it follows from Lemma \ref{FGKMTcorrbound} that
\begin{align*}
\E\bigg[\sum_n\sigma^{-r}\sum_{p\in\cP}Z_{p}(\vec{\a};n)\bigg]=&\sigma^{-r}\sum_{p\in\cP}\sum_{n\in\cE_{p}}\P[\tilde{\n}_p=n]\P\big[n+v_ip\in S(\vec{\a})\big]\\
=& \big(1+O(\log^{-16} x)\big)\cdot|\cP|.
\end{align*}
Next, by (\ref{Xpa}) and Lemma \ref{FGKMTlem63}, we have
\begin{align*}
&\E\bigg[\sum_n\sigma^{-r}\sum_{p\in\cP(\vec{\a})}Z_{p}(\vec{\a};n)\bigg]=\sigma^{-r}\sum_{\vec{a}}\P[\vec{\a}=\vec{a}]\sum_{p\in\cP(\vec{a})}X_p(\vec{a})\\
= &\big(1+O(\log^{-3} x)\big)\cdot\E\big[|\cP(\vec{\a})|\big]=\big(1+O(\log^{-3} x)\big)\cdot|\cP|.
\end{align*}
So we conclude that the left-hand side of (\ref{FGKMTpf62620}) is $$O\big(|\cP|\cdot\log^{-3} x\big)=O\big(x\log^{-4} x\big).$$ Then  (\ref{FGKMTpf62620}) immediately follows form (\ref{defy}) and (\ref{SWr}).
\end{proof}
\begin{lemma}\label{sigam-rsumsumP} With the probability $1-o(1)$,
\begin{equation}\label{Zp}
\sum_{i=1}^{r}\sum_{p\in\cP}Z_{p}(\vec{\a};q-v_ip)=\big(1+O_{\leq}(\log_2^{-3} x)\big)\cdot\sigma^{r-1}u\cdot\frac{x}{2y},
\end{equation}
for all but at most $x/(2\log x\log_2 x)$ primes $q\in\cQ\cap S(\vec{\a})$.
\end{lemma}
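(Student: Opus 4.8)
The plan is to estimate both the first and second moments of the left-hand side of (\ref{Zp}) as $q$ ranges over $\cQ$, and then apply the modified Chebyshev inequality (Lemma \ref{Mchebyshev}) followed by a first-moment/Markov argument to conclude that the estimate holds for all but a negligible fraction of $q$. First I would fix $1\leq i\leq r$ and a prime $q\in\cQ$, and compute $\E\big[\sum_{p\in\cP}Z_p(\vec{\a};q-v_ip)\big]$. Writing $Z_p(\vec{\a};n)=\1_{n+v_jp\in S(\vec{\a})\text{ for all }j}\,\1_{n\in\cE_p}\,\P[\tilde{\n}_p=n]$ (note that the event $\tilde{\n}_p=n$ is independent of $\vec{\a}$), one has $\E\big[Z_p(\vec{\a};q-v_ip)\big]=\P[\tilde{\n}_p=q-v_ip]\,\1_{q-v_ip\in\cE_p}\,\P\big[(q-v_ip)+v_jp\in S(\vec{\a})\text{ for all }1\leq j\leq r\big]$. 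By Lemma \ref{FGKMTcorrbound} with $t=r\leq\log x$ (the shifts $q+(v_j-v_i)p$ all have absolute value $O(y)$), this last probability is $(1+O(\log^{-16}x))\sigma^{r}$; moreover $q-v_ip\in\cE_p$ fails only when $q\equiv h'_\ell\pmod p$ for some $\ell$, which happens for at most $O(1)$ primes $p\in\cP$ and contributes negligibly by (\ref{Pnpn}). Summing over $p\in\cP$ and invoking (\ref{sumPqnpvip}) gives
\[
\E\bigg[\sum_{p\in\cP}Z_p(\vec{\a};q-v_ip)\bigg]=\big(1+O(\log^{-10}_2x)\big)\cdot\sigma^{r}\cdot\frac{u}{r}\cdot\frac{x}{2y},
\]
and summing over $1\leq i\leq r$ produces the main term $(1+O(\log^{-10}_2x))\,\sigma^{r-1}u\,\frac{x}{2y}$ claimed in (\ref{Zp}), after dividing out one factor of $\sigma$ — here I should be careful that I want a bound of the shape $\sigma^{r-1}u\cdot\frac{x}{2y}$, so I track the powers of $\sigma$ honestly.

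Next I would compute the second moment $\E\big[\big(\sum_{i}\sum_{p}Z_p(\vec{\a};q-v_ip)\big)^{2}\big]$. Expanding the square gives a double sum over pairs $(i,p)$, $(i',p')$; I split according to whether $p=p'$ or $p\neq p'$. The diagonal terms $p=p'$ contribute $\sum_i\sum_{i'}\sum_{p}\P[\tilde{\n}_p=q-v_ip]\,\P[\tilde{\n}_p=q-v_{i'}p]\cdot(\cdots)$, which by (\ref{Pnpn}) is $O(x^{-1/2-1/6+o(1)})$ per prime, hence $O(x^{1/2-1/6+o(1)})$ in total after summing over $p$ — negligible against the square of the main term, which is of size $x^{o(1)}\cdot(x/y)^2\asymp x^{2+o(1)}/y^2$. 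For the off-diagonal terms $p\neq p'$, the variables $\tilde{\n}_p$ and $\tilde{\n}_{p'}$ are independent and both independent of $\vec{\a}$, so the term factors as $\P[\tilde{\n}_p=q-v_ip]\,\P[\tilde{\n}_{p'}=q-v_{i'}p']\cdot\P\big[\text{all }2r\text{ shifts lie in }S(\vec{\a})\big]$; applying Lemma \ref{FGKMTcorrbound} with $t=2r\leq\log x$ (the $2r$ shifts are distinct for distinct $p\neq p'$ apart from $O(1)$ exceptional $p'$, which I discard with a negligible error via (\ref{Pnpn})) gives $(1+O(\log^{-16}x))\sigma^{2r}$, and re-summing over $p,p'$ using (\ref{sumPqnpvip}) twice yields $(1+O(\log^{-10}_2x))\,\sigma^{2r}u^2(x/2y)^2$, i.e. $\big(1+O(\log^{-10}_2x)\big)$ times the square of the main term. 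Thus with $A:=\sigma^{r-1}u\cdot\frac{x}{2y}$ I obtain $\E[Y_q]=A(1+O_{\leq}(\epsilon))$ and $\E[Y_q^2]=A^2(1+O_{\leq}(\epsilon))$ with $\epsilon=O(\log^{-10}_2x)$, where $Y_q:=\sum_i\sum_p Z_p(\vec{\a};q-v_ip)$.

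With these two moment estimates in hand, Lemma \ref{Mchebyshev} applied with $\delta=\log^{-3}_2x$ (which dominates $\epsilon=O(\log^{-10}_2 x)$) gives, for each fixed $q\in\cQ$,
\[
\P\big[|Y_q-A|\geq\log_2^{-3}x\cdot A\big]=O\big(\log^{-4}_2 x\big).
\]
Let $N_{\mathrm{bad}}$ count the primes $q\in\cQ\cap S(\vec{\a})$ for which (\ref{Zp}) fails; then $\E[N_{\mathrm{bad}}]\leq\sum_{q\in\cQ}\P[|Y_q-A|\geq\log_2^{-3}x\,A]=O(|\cQ|\log^{-4}_2x)=O\big(\frac{y}{\log x}\log^{-4}_2x\big)=O\big(\frac{x\log_2 x}{\log x\cdot\log^{4}_2 x}\big)=o\big(\frac{x}{\log x\log_2 x}\big)$ using (\ref{defy}) and $|\cQ|\sim y/\log x$. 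Hence by Markov's inequality $N_{\mathrm{bad}}\leq x/(2\log x\log_2 x)$ with probability $1-o(1)$, which is exactly the assertion of the lemma. The main obstacle I anticipate is the bookkeeping in the second-moment computation: one must verify carefully that the $2r$ shifts $q+(v_j-v_i)p,\ q+(v_j-v_{i'})p'$ are genuinely distinct (so Lemma \ref{FGKMTcorrbound} applies with exponent exactly $2r$) except for $O(1)$ bad choices of $p'$ given $p$, and that these bad pairs, together with the $p=p'$ diagonal, contribute an error that is truly negligible compared to $A^2=x^{o(1)}(x/y)^2$ — this requires the savings $x^{-1/2-1/6+o(1)}$ from (\ref{Pnpn}) and the smallness of $r$ from (\ref{maxr}). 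Everything else is a routine combination of Lemma \ref{FGKMTcorrbound}, (\ref{sumPqnpvip}), and (\ref{Pnpn}).
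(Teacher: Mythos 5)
Your overall architecture (first and second moments, then Chebyshev, then Markov on the bad count) is the right family of ideas, but you have applied it to the wrong random variable, and the per-$q$ concentration you claim is actually false. The point you flag but do not resolve --- ``after dividing out one factor of $\sigma$'' --- is precisely where the argument breaks. Every summand of $Y_q:=\sum_i\sum_p Z_p(\vec{\a};q-v_ip)$ carries the indicator that $q-v_ip+v_jp\in S(\vec{\a})$ for \emph{all} $j$, and the case $j=i$ forces $q\in S(\vec{\a})$. Hence $Y_q=0$ whenever $q\notin S(\vec{\a})$, an event of probability $1-\sigma(1+o(1))=1-o(1)$ for each fixed $q\in\cQ$. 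Consistently with this, your own first-moment computation gives $\E[Y_q]\sim\sigma^r u\frac{x}{2y}=\sigma A$, not $A=\sigma^{r-1}u\frac{x}{2y}$; and in the second moment the $2r$ shifts attached to $(i,p)$ and $(i',p')$ with $p\neq p'$ are never all distinct --- both families contain $q$ itself (at $j=i$ and $j=i'$), and no other coincidence is possible since $p,p'>x/2$ exceed $|v_j-v_i|\le 2r^2$ --- so Lemma \ref{FGKMTcorrbound} yields $\sigma^{2r-1}$, giving $\E[Y_q^2]\sim\sigma A^2$ rather than $A^2$. These moments are exactly those of a variable that is $0$ with probability $1-\sigma$ and $\approx A$ with probability $\sigma$, so Lemma \ref{Mchebyshev} with the constant $A$ does not apply, and indeed $\P[|Y_q-A|\ge\log_2^{-3}x\cdot A]\ge\P[q\notin S(\vec{\a})]=1-o(1)$.

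Even if a per-$q$ bound $O(\log_2^{-4}x)$ were available, your final union bound over all of $\cQ$ would still fail quantitatively: $|\cQ|\sim y/\log x\asymp x\log_3 x/\log_2 x$ (you substituted the size of $\cQ\cap S(\vec{\a})$, which is $\sim\sigma y/\log x\asymp x\log_2x/\log x$, for that of $\cQ$), so $\E[N_{\mathrm{bad}}]=O(|\cQ|\log_2^{-4}x)\asymp x\log_3x/\log_2^5x$, which exceeds the target $x/(2\log x\log_2 x)$ by a factor $\log x\log_3 x/\log_2^4x\to\infty$. The factor of $\sigma$ you are missing in the moments is the same factor by which $\cQ\cap S(\vec{\a})$ is smaller than $\cQ$, and exploiting it requires working with the random index set. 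That is what the paper does: it computes $\E\big[\sum_{q\in\cQ\cap S(\vec{\a})}Y_q\big]$ and $\E\big[\sum_{q\in\cQ\cap S(\vec{\a})}Y_q^2\big]$, normalizes by $|\cQ\cap S(\vec{\a})|$ using the concentration from Lemma \ref{estQjSa} (this is where the spare $\sigma^{-1}$ legitimately appears), and then applies Markov's inequality twice --- once in $\vec{\a}$ and once in a uniformly random $\q\in\cQ\cap S(\vec{\a})$ --- to control the proportion of bad $q$ inside $\cQ\cap S(\vec{\a})$. To repair your write-up you would need to replace the fixed-$q$ Chebyshev step by this averaged, two-stage Markov argument.
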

\begin{proof}
Call  a prime $q\in\cQ$ {\it bad}, if $q\in\cQ\cap S(\vec{\a})$ but (\ref{Zp}) fails.
With help of Lemma \ref{FGKMTcorrbound} and (\ref{sumPqnpvip}), we have
\begin{align*}
& \E\bigg[\sum_{q\in\cQ\cap S({\vec{\a}})}\sum^{r}_{i=1}\sum_{p\in\cP}Z_{p}(\vec{\a};q-v_ip)\bigg]\\ = & \sum_{\substack{q,i,p\\q-v_ip\in\cE_{p}}}\P\big[q+(v_j-v_i)p\in S(\vec{\a})\text{ for all }j=1,\dots,r\big]\P[\tilde{\n}_p=q-v_ip]\\
= &\big(1+O(\log^{-16} x)\big)\sigma^r\sum_{i=1}^r\sum_{q,p}
\P[\tilde{\n}_p=q-v_ip]\1_{q-v_ip\in\cE_{p}}.
\end{align*}
Fix $i\in\{1,\ldots,r\}$. For any $1\leq j\leq k_m$ and $q\in\cQ$, assume that $q-v_ip_1\equiv h_j\pmod{p_1}$ and $q-v_ip_2\equiv h_j\pmod{p_2}$ for two distinct primes $p_1,p_2\in\cP$. Then we have $q-h_j'$ is divisible by $p_1p_2$. It is impossible since $q=O(x\log x)$ and $p_1,p_2>x/2$. Hence
$$
\big|\{p\in\cP:\,q-v_ip\not\in\cE_p\}\big|\leq k_m.
$$
It follows from (\ref{sumPqnpvip}) and (\ref{Pnpn}) that
\begin{align}
\sum_{q,p}\label{PnpqvipcEp}
\P[\tilde{\n}_p=q-v_ip)]\1_{q-v_ip\in\cE_{p}}=&
\sum_{q,p}
\P[\tilde{\n}_p=q-v_ip]+O_m\big(|\cQ|\cdot x^{-\frac23+o(1)}\big)\notag\\
=&\bigg(1+O\bigg(\frac{1}{\log^{10}_{2} x}\bigg)\bigg)\cdot\frac{u}{r}\cdot\frac{x}{2y}\cdot|\cQ|,
\end{align}
i.e.,
\begin{align*}
\E\bigg[\sum_{q\in\cQ\cap S({\vec{\a}})}\sum^{r}_{i=1}\sum_{p\in\cP}Z_{p}(\vec{\a};q-v_ip)\bigg]=&\big(1+O(\log_{2}^{-10} x)\big)\sigma^rr\cdot\frac{u}{r}\cdot\frac{x}{2y}\cdot\frac{y}{\log x}\\
=&\big(1+O(\log_{2}^{-10} x)\big)\cdot\frac{\sigma^rux}{2\log x}
\end{align*}
by the prime number theorem.
Similarly,
\begin{align*}
&\E\bigg[\sum_{q\in\cQ\cap S({\vec{\a}})}\bigg(\sum^{r}_{i=1}\sum_{p\in\cP(\vec{\a})}Z_{p}(\vec{\a};q-v_ip)\bigg)^2\bigg]\\
=& \sum_{\substack{i_1,i_2,q\\p_1\neq p_2\\q-v_{i_1}p_1\in\cE_{p_1}\\q-v_{i_2}p_2\in\cE_{p_2}}}\P\big[q+(v_j-v_{i_l})p_l\in S(\vec{\a})\text{ for }1\leq j\leq r,\ l=1,2\big]\prod_{l=1,2}\P[\tilde{\n}^{(l)}_{p_l}=q-v_{i_l}p_l]\\
&+\sum_{\substack{p,q,i_1,i_2\\q-v_{i_1}p\in\cE_{p}\\q-v_{i_2}p\in\cE_{p}}}\P\big[q+(v_j-v_{i})p\in S(\vec{\a})\big]\prod_{l=1,2}\P[\tilde{\n}^{(l)}_{p}=q-v_{i_l}p]\\
=&\big(1+O(\log_{2}^{-10} x)\big)\bigg(\sigma^{2r-1}\sum_{p_1\neq p_2}\sum_{q,i_1,i_2}
\prod_{l=1,2}\P[\tilde{\n}^{(l)}_{p_l}=q-v_{i_l}p_l]\1_{q-v_{i_l}p_l\in\cE_{p}}+O(x^{-\frac{2}{3}+o(1)})\frac{\sigma^rux}{2\log x}\bigg),
\end{align*}
by noting that
$$
\sum_{p}\sum_{q,i_1,i_2}
\prod_{l=1,2}\P[\tilde{\n}^{(l)}_{p}=q-v_{i_l}p]\1_{q-v_{i_l}p\in\cE_{p}}=O(x^{-\frac23+o(1)})\sum_{i}\sum_{q,p}
\P[\tilde{\n}_{p}=q-v_{i}p]\1_{q-v_ip\in\cE_{p}}
$$
by (\ref{Pnpn}). It follows from (\ref{PnpqvipcEp}) that
\begin{align*}
&\sum_{p_1\neq p_2}\sum_{q,i_1,i_2}
\prod_{l=1,2}\P[\tilde{\n}^{(l)}_{p_l}=q-v_{i_l}p_l]\1_{q-v_{i_l}p_l\in\cE_{p}}\\
=&\bigg(\sum_{i}\sum_{q,p}
\P[\tilde{\n}_{p}=q-v_{i}p]\1_{q-v_ip\in\cE_{p}}+O(x^{-\frac23+o(1)})\bigg)^2\\
=&\big(1+O(\log_{2}^{-10} x)\big)\cdot\frac{u^2x^2}{4\log^2 x}
\end{align*}

Let $\E_{\vec{\a}}$ (resp. $\P_{\vec{\a}}$) denote the expectation (resp. probability) with respect to $\vec{\a}$.  In view of (\ref{QjSabound}), with the probability of $1-o(1)$, we have
\begin{align*}
O(\log_2^{-10} x)\cdot\bigg(\frac{u}{\sigma}\cdot\frac{x}{2y}\bigg)^2=&\E_{\vec{\a}}\bigg[\frac{1}{|\cQ\cap S(\vec{\a})|}\sum_{q\in\cQ\cap S(\vec{\a})}\bigg(\sigma^{-r}\sum_{p\in\cP}Z_{p}(\vec{\a};q-v_ip)-\frac{u}{\sigma}\cdot\frac{x}{2y}\bigg)^2\bigg]\\
=&\E_{\vec{\a},\q}\bigg[\bigg(\sigma^{-r}\sum_{p\in\cP}Z_{p}(\vec{\a};q-v_ip)-\frac{u}{\sigma}\cdot\frac{x}{2y}\bigg)^2\bigg]\\
=&\E_{\vec{\a}}\bigg[\E_{\q}\bigg[\bigg(\sigma^{-r}\sum_{p\in\cP}Z_{p}(\vec{\a};q-v_ip)-\frac{u}{\sigma}\cdot\frac{x}{2y}\bigg)^2\bigg]\bigg],
\end{align*}
where $\q$ is chosen uniformly at random from $\cQ\cap S(\vec{\a})$.
By the Markov inequality,
$$
\P_{\vec{\a}}\bigg[\E_{\q}\bigg[\bigg(\sigma^{-r}\sum_{p\in\cP}Z_{p}(\vec{\a};q-v_ip)-\frac{u}{\sigma}\cdot\frac{x}{2y}\bigg)^2\bigg] \geq \log^{-9}_2 x\cdot\bigg(\frac{ux}{2\sigma y}\bigg)^2\bigg]=o\bigg(\frac{1}{\log_3 x}\bigg),
$$
i.e.,
\begin{equation}\label{EFqRa}
\E_{\q}\bigg[\bigg(\sigma^{-r}\sum_{p\in\cP}Z_{p}(\vec{\a};q-v_ip)-\frac{u}{\sigma}\cdot\frac{x}{2y}\bigg)^2\bigg] \leq \log^{-9}_2 x\cdot\bigg(\frac{ux}{2\sigma y}\bigg)^2
\end{equation}
with the probability $1-o(1)$ of $\vec{\a}$.
Applying Markov's inequality again, for almost all $\vec{a}$, we obtain that
$$
\P_{\q}\bigg[\bigg|\sigma^{-r}\sum_{p\in\cP}Z_{p}(\vec{\a};q-v_ip)-\frac{u}{\sigma}\cdot\frac{x}{2y}\bigg|\geq \log^{-3}_2 x\cdot\frac{ux}{2\sigma y}\bigg]=O\big(\log_2^{-3} x\big).
$$
Since the choice of $\q$ is chosen uniformly random, the number of bad $q$ is at most
$$O\bigg(\frac{\sigma y}{\log x}\cdot \frac{1}{\log_2^{3} x}\bigg)$$ with the probability $1-o(1)$ of $\vec{\a}$.
\end{proof}\begin{lemma}\label{FGKMTlem62}
With the probability $1-o(1)$, we have
\begin{equation}\label{sigam-rsumsum}
\sigma^{-r}\sum_{i=1}^{r}\sum_{p\in\cP({\vec{\a}})}Z_p(\vec{\a};q-v_ip)=\big(1+O(\log_2^{-3} x)\big)\cdot\frac{u}{\sigma}\cdot\frac{x}{2y}
\end{equation}
for all but at most $x/(2\log x\log_2 x)$ of the prime $q\in\cQ\cap S(\vec{\a})$.
\end{lemma}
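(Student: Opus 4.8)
The plan is to obtain (\ref{sigam-rsumsum}) by subtracting from the full sum $\sigma^{-r}\sum_{i=1}^{r}\sum_{p\in\cP}Z_p(\vec{\a};q-v_ip)$, which is controlled by Lemma \ref{sigam-rsumsumP}, the contribution of the ``bad'' primes $p\in\cP\backslash\cP(\vec{\a})$. Concretely, for every $q\in\cQ$,
\begin{align*}
\sigma^{-r}\sum_{i=1}^{r}\sum_{p\in\cP(\vec{\a})}Z_p(\vec{\a};q-v_ip)
=&\sigma^{-r}\sum_{i=1}^{r}\sum_{p\in\cP}Z_p(\vec{\a};q-v_ip)\\
&-\sigma^{-r}\sum_{i=1}^{r}\sum_{p\in\cP\backslash\cP(\vec{\a})}Z_p(\vec{\a};q-v_ip),
\end{align*}
with both terms on the right non-negative. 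By Lemma \ref{sigam-rsumsumP} the first term equals $\big(1+O_{\leq}(\log_2^{-3}x)\big)\cdot\frac{u}{\sigma}\cdot\frac{x}{2y}$, with probability $1-o(1)$, for all $q\in\cQ\cap S(\vec{\a})$ outside an exceptional set $\mathcal{B}_1$ of size $\le x/(2\log x\log_2 x)$, so it remains only to bound the second term by $\frac12\log_2^{-3}x\cdot\frac{u}{\sigma}\cdot\frac{x}{2y}$ for almost every $q$.

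For that term I would sum over all of $\cQ$. For fixed $i$ and $p$ the map $q\mapsto q-v_ip$ is injective and $Z_p\ge 0$, so $\sum_{q\in\cQ}Z_p(\vec{\a};q-v_ip)\le\sum_{n\in\Z}Z_p(\vec{\a};n)$, whence
\begin{align*}
&\sigma^{-r}\sum_{q\in\cQ}\sum_{i=1}^{r}\sum_{p\in\cP\backslash\cP(\vec{\a})}Z_p(\vec{\a};q-v_ip)\\
&\qquad\le r\cdot\sigma^{-r}\sum_{p\in\cP\backslash\cP(\vec{\a})}\sum_{n\in\Z}Z_p(\vec{\a};n).
\end{align*}
Taking expectations and invoking (\ref{FGKMTpf62620}) (the factor $r$ cancelling the $1/r$ there), the expectation of the left side is $o\big(\frac{u}{\sigma}\cdot\frac{x}{2y}\cdot\frac{1}{\log_2^3 x\log_2^2 x}\big)$. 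Markov's inequality over $\vec{\a}$ then gives, with probability $1-o(1)$, the $\vec{\a}$-wise bound $\sigma^{-r}\sum_{q\in\cQ}\sum_{i=1}^{r}\sum_{p\in\cP\backslash\cP(\vec{\a})}Z_p(\vec{\a};q-v_ip)\le \frac{u}{\sigma}\cdot\frac{x}{2y}\cdot\log_2^{-4}x$; on that event a second, counting, application of Markov shows that the number of $q\in\cQ$ at which the inner sum exceeds $\frac12\log_2^{-3}x\cdot\frac{u}{\sigma}\cdot\frac{x}{2y}$ is at most $2/\log_2 x$, hence $0$ once $x$ is large. Intersecting the two events of probability $1-o(1)$ and subtracting, (\ref{sigam-rsumsum}) holds with probability $1-o(1)$ for all $q\in\cQ\cap S(\vec{\a})$ outside $\mathcal{B}_1$, a set of size $\le x/(2\log x\log_2 x)$.

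It then remains to read off part (III) of Proposition \ref{RanCon}. Fix $\vec a$ in the essential range of $\vec{\a}$ and $q\in\cQ\cap S(\vec a)$: the $r$ events $\{\n_p=q-v_ip\}$ are disjoint since the $v_i$ are distinct; a prime $p\in\cP\backslash\cP(\vec a)$ contributes nothing because $\n_p=0\ne q$; and for $p\in\cP(\vec a)$ we have $\P[\n_p=q-v_ip\,|\,\vec{\a}=\vec a]=Z_p(\vec a;q-v_ip)/X_p(\vec a)$ by (\ref{PZ}), with $X_p(\vec a)=(1+O_{\leq}(\log^{-3}x))\sigma^r$ by (\ref{Xpabound}). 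Hence $\sum_{p\in\cP}\P[q\in\e_p(\vec a)\,|\,\vec{\a}=\vec a]=(1+O(\log^{-3}x))\,\sigma^{-r}\sum_{i=1}^{r}\sum_{p\in\cP(\vec a)}Z_p(\vec a;q-v_ip)$, which by (\ref{sigam-rsumsum}) equals $(1+O(\log_2^{-3}x))\cdot\frac{u}{\sigma}\cdot\frac{x}{2y}$ for all but $\le x/(2\log x\log_2 x)\le x/(\log x\log_2 x)$ of the $q\in\cQ\cap S(\vec a)$, with probability $1-o(1)$. Taking $C:=\frac{u}{\sigma}\cdot\frac{x}{2y}$, which satisfies $C\asymp c_0^{-1}$ by (\ref{Sigma_y}), (\ref{SWu}) and (\ref{maxr}) (as $u\asymp\log r\asymp\log_2 x$ and $\sigma y\asymp x\log_2 x$), and noting $O(\log_2^{-3}x)\cdot C=O_{\leq}(\log_2^{-2}x)$ for $x$ large, this is exactly (\ref{RanConSumP}).

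All the analytic content is already contained in Lemma \ref{sigam-rsumsumP} and the bound (\ref{FGKMTpf62620}); the remaining difficulty is purely organizational. The one point to be attentive about is the two-layer probabilistic step — a Markov bound over $\vec{\a}$ followed by a counting bound over $q$ — used to convert an expectation bound on $\sum_{q}(\cdots)$ into a pointwise bound valid for all but $o(x/(\log x\log_2 x))$ of the $q$, together with the check that, after subtracting the $\cP\backslash\cP(\vec{\a})$ part, both the multiplicative error $1+O(\log_2^{-3}x)$ and the bound $x/(2\log x\log_2 x)$ on the exceptional set survive. They do with room to spare, essentially because $\frac{u}{\sigma}\cdot\frac{x}{2y}\asymp 1$ whereas the relevant expectation, once divided by this quantity, is $o(1)$.
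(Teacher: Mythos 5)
Your proposal is correct in substance and follows the same strategy as the paper's proof: write the sum over $\cP(\vec{\a})$ as the full sum over $\cP$, which Lemma \ref{sigam-rsumsumP} controls, minus the non-negative contribution of $\cP\setminus\cP(\vec{\a})$, which is small in mean by (\ref{FGKMTpf62620}) and is then controlled pointwise in $q$, outside a small exceptional set, by Markov. The one genuine difference is in the bookkeeping of that error term: the paper fixes $i$, normalizes by $|\cQ\cap S(\vec{\a})|$ via Lemma \ref{estQjSa}, and applies Markov twice (once in $\vec{\a}$, once in a uniformly random $\q\in\cQ\cap S(\vec{\a})$), whereas you sum over all of $\cQ$ and all $i$ simultaneously using the injectivity of $q\mapsto q-v_ip$. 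Your variant is arguably tidier: handling all $i$ at once avoids the union over $i$ of $r$ per-index exceptional sets, a point the paper's fixed-$i$ formulation leaves implicit.

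One caveat: you take (\ref{FGKMTpf62620}) at face value, and as printed that display is mis-normalized --- its own proof only establishes that the left-hand side is $O(x\log^{-4}x)$, a quantity tending to infinity, while the stated right-hand side is $o(1)$; a factor of order $|\cQ\cap S(\vec{\a})|\asymp\sigma y/\log x$ has been dropped (what is genuinely small is the average over $q\in\cQ\cap S(\vec{\a})$, which is how the paper's proof uses it). This is why your count of exceptional $q$ for the subtracted term comes out as the implausibly strong ``at most $2/\log_2 x$, hence $0$.'' The damage is cosmetic: rerunning your two Markov steps with the bound $O(x\log^{-4}x)$ that is actually proved yields an exceptional set of size $O\big(rx\log^{-4}x\log_2^{4}x\big)=o\big(x/(\log x\log_2 x)\big)$, since $r\leq\log^{\eta_0}x$ with $\eta_0$ small, so the lemma still follows with the claimed bound. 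Your final paragraph deriving (III) of Proposition \ref{RanCon} reproduces the paper's concluding computation and is fine, though it goes beyond what the lemma itself asserts.
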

\begin{proof}
Fix $i$ and substitute $n=q-v_ip$. By Lemma \ref{estQjSa} and (\ref{FGKMTpf62620}), with the probability $1-o(1)$, we have
\begin{align*}
o\bigg(\frac{u}{\sigma}\cdot\frac{x}{2y}\cdot\frac{1}{r\log^3_2x\log^2_2 x}\bigg)  =& \E_{\vec{\a}}\bigg[\frac{1}{|\cQ\cap S(\vec{\a})|}\sum_{q\in\cQ\cap S(\vec{\a})}\sigma^{-r}\sum_{p\in\cP\backslash\cP(\vec{\a})}Z_{p}(\vec{\a};q-v_ip)\bigg]\\
=& \E_{\vec{\a}}\bigg[\E_{\q}\bigg[\sigma^{-r}\sum_{p\in\cP\backslash\cP(\vec{\a})}Z_{p}(\vec{\a};\q-v_ip)\bigg]\bigg],
\end{align*}
where we choose those $\q\in\cQ\cap S(\vec{\a})$ uniformly randomly. Using Markov's inequality, we get
$$
\E\bigg[\sigma^{-r}\sum_{p\in\cP\backslash\cP(\vec{\a})}Z_{p}(\vec{a};\q-v_ip)\bigg]\leq \frac{u}{\sigma}\cdot\frac{x}{2y}\cdot\frac{1}{r\log^3_2x\log^2_2 x}$$
with the probability $1-o(1)$ of $\vec{\a}$.
Using the Markov inequality again, we have
$$
\P\bigg[\sigma^{-r}\sum_{p\in\cP\backslash\cP(\vec{\a})}Z_{p}(\vec{a};\q-v_ip)\geq \frac{u}{\sigma}\cdot\frac{x}{2y}\cdot\frac{1}{r\log^3_2x}\bigg] =O\bigg(\frac{1}{\log_2^2 x}\bigg).
$$
Hence the number of those $q\in \cQ\cap S(\vec{\a})$ such that
$$
\sigma^{-r}\sum_{p\in\cP\backslash\cP(\vec{\a})}Z_{p}(\vec{a};\q-v_ip)\geq \frac{u}{\sigma}\cdot\frac{x}{2y}\cdot\frac{1}{r\log^3_2x}
$$
is at most
$$
|\cQ\cap S(\vec{\a})|\cdot O\bigg(\frac{1}{\log_2^2 x}\bigg)=O\bigg(\frac{x}{\log x\log_2 x}\bigg).
$$
Thus by Lemma \ref{sigam-rsumsumP}, (\ref{sigam-rsumsum}) is concluded.
\end{proof}

Now we are ready to complete the proof of Proposition \ref{RanCon}.
Suppose that  $\vec{a}$ is good (the definition from Proposition \ref{RanCon}) and $q\in\cQ\cap S(\vec{a})$. Clearly $q=\n_p + v_ip$ only if $p\in\cP(\vec{a})$, since $\n_p = 0$ for $p\in\cP\setminus \cP(\vec{a})$. Substituting (\ref{PZ}) into
the left hand side of of (\ref{sigam-rsumsum}) and applying (\ref{Xpabound}), we find that
\begin{align*}
\sigma^{-r}\sum_{i=1}^{r}\sum_{p\in\cP(\vec{a})}Z_p(\vec{a};q-v_ip)= &\sigma^{-r}\sum_{i=1}^{r}\sum_{p\in\cP(\vec{a})}X_p(\vec{a})\P\big[\n_p=q-v_ip\big|\,\vec{\a}=\vec{a}\big]\\
= &\big(1+O(\log^{-3} x)\big)\cdot\sigma^{-r}\sum_{i=1}^{r}\sum_{p\in\cP(\vec{a})}\P\big[\n_p=q-v_ip\big|\,\vec{\a}=\vec{a}\big]\\
= &\big(1+O(\log^{-3} x)\big)\cdot\sigma^{-r}\sum_{p\in\cP}\P\big[q\in\e_p(\vec{a})\big|\,\vec{\a}=\vec{a}\big],
\end{align*}
where $$\e_p(\vec{a}) =\{\n_p+v_ip:\,1 \leq i \leq r\} \cap\cQ\cap S(\vec{a})$$ is the one defined in Proposition \ref{RanCon}. Thus in view of (\ref{maxr}), (\ref{SWtau}) and (\ref{Sigma_y}), (III) holds with the probability $1-o(1)$, by setting
$$
C:=\frac{u}{\sigma}\cdot\frac{x}{2y}\asymp\frac{1}{c_0}.
$$

\section{The least prime in an arithmetic progression}
\setcounter{lemma}{0}
\setcounter{corollary}{0}
\setcounter{remark}{0}
\setcounter{equation}{0}
\setcounter{conjecture}{0}
\setcounter{proposition}{0}

For two positive integers $k,l$ with $(k,l)=1$ and $k\geq l$, let $p(k,l)$ denote the least prime in the arithmetic progression $\{kn+l:\,n\geq 1\}$. In \cite{Prachar} (see also \cite{Schinzel}), with help of the Erd\"os-Rankin method, Prachar proved that for any given $l\geq 1$, there exists infinitely many $k$ such that
\begin{equation}
p(k,l)\geq ck\cdot\frac{\log k\log_2k\log_4k}{\log_3^2k},
\end{equation}
where $c>0$ is a absolute constant. 

Here, we may improve the result of Prachar as follows.
\begin{theorem}\label{lpg}
For any given $l\geq 1$, there exists infinitely many $k$ such that
\begin{equation}
p(k,l)\geq ck\cdot\frac{\log k\log_2k\log_4k}{\log_3^2k},
\end{equation}
where $c>0$ is a absolute constant. 
\end{theorem}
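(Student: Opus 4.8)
The plan is to run the argument behind Theorem \ref{main} --- which is the argument of \cite{FGKMT17} --- with the modulus $k$ playing the role that the shift parameter $b$ played there. I would fix a large real $x$, set $y=c_0x\cdot\frac{\log x\log_3 x}{\log_2 x}$ as in (\ref{defy}), introduce the quantity $\cZ_T$ of (\ref{sgzero3}) for a suitable level $T$ and delete it from all products of primes exactly as before, and take over the sets $\cS,\cP,\cQ$ of (\ref{setS})--(\ref{setQ}). The only structural change is a dictionary: for a prime $p\nmid k$ one has $p\mid kn+l$ precisely when $n\equiv -lk^{-1}\pmod{p}$, so prescribing the residue $k\bmod p$ is the same as prescribing which residue class of $n$ is covered by $p$. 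Compared with the gap problem the sole loss is that, for $p\nmid l$, the class covered by $p$ must be nonzero (because $kn+l\equiv l\not\equiv 0\pmod{p}$), whereas for the finitely many $p\mid l$ the prime $p$ automatically covers the class $n\equiv0$ as long as $p\nmid k$. Since Theorem \ref{lpg} asks only for one large gap and not for a cluster of nearby primes, no Maynard--Tao input (Lemma \ref{MT}) is needed, so the construction is in fact a simplification of the one in Sections 2--3.

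The place where the residue class $0$ is genuinely used in \cite{FGKMT17} (and in Section 3 above) is the choice $a_p=0$ for the small primes $p\le\log^{20}x$ and the medium primes $p\in(\alpha,x/2]$, which forces the surviving integers to be free of such prime factors and thereby lets de Bruijn's theorem \cite{deB51} control the composite survivors. To reproduce this I would instead have each such $p$ cover the class $n\equiv -\mathrm{rad}(l)\pmod{p}$, where $\mathrm{rad}(l)=\prod_{q\mid l}q=O_l(1)$: for $p\nmid l$ this is a legitimate nonzero class, realised by $k\equiv l/\mathrm{rad}(l)\pmod{p}$, while for $p\mid l$ it coincides with the class $0$, covered for free once $p\nmid k$. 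A surviving $n$ then has $n+\mathrm{rad}(l)$ free of all prime factors in $[1,\log^{20}x]\cup(\alpha,x/2]$, so applying de Bruijn's theorem to $n+\mathrm{rad}(l)$ in place of $n$ shows, exactly as in Section 3, that the number of surviving $n$ with $n+\mathrm{rad}(l)$ smooth is $o(x/\log x)$, while any other surviving $n$ has $n+\mathrm{rad}(l)$ a prime in $(x/2,\,y+O_l(1)]$. Those remaining primes are cleaned up by the $\cS$- and $\cP$-sieves exactly as the primes of $\cQ$ are handled in Proposition \ref{RanCon}; the only cosmetic changes are that the random classes $\a_s$, and the random integers $\n_p$, are drawn avoiding the residue $0$, which is immaterial since the moduli exceed $\log^{20}x$. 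This gives the analogue of Proposition \ref{sievep}: at most $O(x/\log x)$ integers $n\in[1,y]$ are left uncovered.

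To finish I would, as in Section 3, attach to each uncovered $n$ a distinct prime $p_n\in(x,C_0x]$ with $p_n\nmid n$ and impose $k\equiv -l\,n^{-1}\pmod{p_n}$; together with the congruences already imposed for $p\in\cS\cup\cP$, for the small and medium primes, for the $p\mid l$, and with arbitrary nonzero residues at the remaining primes $p\le C_0x$, the Chinese Remainder Theorem pins $k$ down modulo $M:=\prod_{p\le C_0x,\ p\nmid\cZ}p$. Taking $C_0$ large and then letting $k$ be any integer in this residue class of size $k\asymp M=e^{(C_0+o(1))x}$, one has $\log k\asymp x$, $(k,l)=1$ automatically (for $x$ large, $k$ is coprime to every small prime that could divide $l$), and $kn+l$ composite for all $1\le n\le y$. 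Hence
$$
p(k,l)\geq k(\lfloor y\rfloor+1)+l>ky\gg k\cdot\frac{\log k\log_2 k\log_4 k}{\log_3 k},
$$
and letting $x\to\infty$ produces infinitely many such $k$; this bound is stronger than, hence implies, the one asserted in Theorem \ref{lpg}.

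The main difficulty is the point treated in the second paragraph. Because $kn+l\equiv l\pmod{p}$, the residue $0$ --- which the Erd\H os--Rankin/FGKMT machinery uses for the small and medium primes to make the surviving integers rough --- is no longer available, and without a substitute the composite survivors, of which there are $\asymp y/\log_2 k\gg x/\log x$, would swamp the primes in $(x,C_0x]$ available for the final clean-up. Shifting by $\mathrm{rad}(l)$ (which matches the forced class $0$ at the primes dividing $l$ and supplies a valid nonzero class elsewhere) restores the roughness of $n+\mathrm{rad}(l)$ and lets the transfer go through; the remaining points --- verifying $(k,l)=1$ and the size of $k$, and checking that forbidding the single residue $0$ leaves all the estimates of Sections 4--5 untouched --- are routine.
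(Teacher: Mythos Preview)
Your proposal is correct and follows the same overall strategy as the paper: run the FGKMT sieve to leave $O(x/\log x)$ uncovered residues, clean these up with distinct primes in $(x,C_0x]$, and then assemble $k$ via the Chinese Remainder Theorem so that $kn+l$ is composite throughout a long initial segment. You also correctly observe that no Maynard--Tao input is needed here, so the $\cZ_T$ exclusion is irrelevant.

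The one substantive difference is the device used to cope with the fact that, for $p\nmid l$, the class $n\equiv 0\pmod p$ cannot be covered. You shift by $\mathrm{rad}(l)$: set $a_p=-\mathrm{rad}(l)$ for the small and medium primes and apply de~Bruijn to $n+\mathrm{rad}(l)$. The paper instead keeps the sieve interval as $(x,y]$ and the choice $a_p=0$ from Section~3 untouched, takes $x=l^\beta$ so that $l\mid x$, and declares the single forbidden residue to be $x$ rather than $0$; one then solves $k(a_p-x)\equiv -l\pmod p$ for $p\nmid l$ and sets $k\equiv 1\pmod p$ for $p\mid l$. Under the change of variable $m=n+x$ this is exactly your construction with $l^\beta$ in place of $\mathrm{rad}(l)$: both shifts are divisible by precisely the primes dividing $l$, which is what makes the $p\mid l$ case go through automatically. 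The paper's choice has the cosmetic advantage that Section~3 carries over verbatim (same interval, same $a_p=0$, only the excluded residue $h_i'$ replaced by $x$), while yours requires the minor rewriting of the smoothness argument for $n+\mathrm{rad}(l)$ and a slight enlargement of $\cQ$; either way the estimates of Sections~4--5 are unaffected since only one residue class per modulus is excluded.
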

Suppose that $x$ is a sufficiently large integer. 
Let $y,\alpha,\cS,\cP,\cQ$ and $S(\vec{a})$ are likewise defined in Sections 2 and 3. 
The only difference is that here we need to set 
$$
\bar{\cE}_p^*:=\{a\mod{p}:\,a\not\equiv x\pmod{p}\}
$$
for any prime $p$. Similarly, we can prove that
\begin{proposition}\label{sievep2}
There are two vectors $\vec{a}=(a_s\in \bar{\cE}_s^*)_{s\in \cS}$ and $\vec{b}=(b_p\in \bar{\cE}_p^*)_{p\in \cP}$, such that
\begin{equation}\label{QjSajSb2}
\big|\cQ\cap S(\vec{a})\cap S(\vec{b})\big|\ll \frac{x}{\log x}.
\end{equation}
\end{proposition}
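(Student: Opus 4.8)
The proof is parallel to that of Proposition \ref{sievep} carried out in Sections 3--5, so I will only indicate the (minor) modifications. Throughout, the admissible tuple $\cH'=\{h'_1,\dots,h'_{k_m}\}$ is replaced by the single forbidden residue $x$: put $\cE_p^*:=\{n\in\Z:\,n\not\equiv x\pmod p\}$, keep $\cS,\cP,\cQ$ and the exceptional-modulus conventions exactly as in Section 3, and note that the greedy/iterative argument of \cite{FGKMT17} that deduces Proposition \ref{sievep} from Proposition \ref{RanCon} uses nothing about $\cH'$. Hence it suffices to establish the analogue of Proposition \ref{RanCon} in which $\bar\cE_p,\cE_p,h'_i$ are replaced by $\bar\cE_p^*,\cE_p^*,x$ everywhere, with the same quantities $C\asymp c_0^{-1}$, $r\leq\sqrt{\log x}$, etc.

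For the random construction I would import the sieve weight $w:\cP\times\Z\to\R^+$ of Lemma \ref{SieveWeight} (whose statement and proof make no reference to $\cH'$), take $r=\lfloor\log^{\eta_0}x\rfloor$ and an admissible $(v_1,\dots,v_r)\subseteq[2r^2]$ as before, let $\tilde\n_p$ have density proportional to $w(p,\cdot)$, and choose $\vec\a=(\a_s)_{s\in\cS}$ by selecting each $\a_s$ uniformly from $\Z_s\setminus\{x\bmod s\}$, independently. Then $S(\vec\a)$ has density $\prod_{s\in\cS}(1-\frac1{s-1})\sim\sigma$ with $\sigma=\prod_{s\in\cS}(1-1/s)$ as in Section 4. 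The correlation estimate of Lemma \ref{FGKMTcorrbound}, namely $\P[n_1,\dots,n_t\in S(\vec\a)]=(1+O(\log^{-16}x))\sigma^t$ for $t\leq\log x$ and $|n_i|\leq y$, goes through verbatim with the constant $1$ playing the role of $h(s)$: among $s\in\cS$ the residues $n_1,\dots,n_t$ are distinct and differ from $x\bmod s$ except for $O(t^2\log x)=O(\log^3 x)$ values of $s$ (using $s\geq\log^{20}x$ and $|n_i-n_j|,|n_i-x|\leq 2x^2$), while $\prod_{s\in\cS}(1-\frac{t}{s-1})(1-\frac1s)^{-t}=1+O(\log^{-18}x)$. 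Consequently the analogues of Lemma \ref{estQjSa} and of (\ref{Pnpn}) hold, so (II) follows; the sets $\cP(\vec a)$ and the integers $\n_p$ are defined exactly as in (\ref{Xpa})--(\ref{PZ}) with $\cE_p$ replaced by $\cE_p^*$, and (I) follows as in Section 4 since $\sum_{n\in\cE_p^*}\P[\tilde\n_p=n]=1-O(y/p)\cdot x^{-1/2-1/6+o(1)}=1-x^{-2/3+o(1)}$.

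For (III) I would rerun Lemmas \ref{FGKMTlem63}--\ref{FGKMTlem62} essentially unchanged. The one new ingredient is the ``few bad primes'' bound used in Lemma \ref{sigam-rsumsumP}: for fixed $i$ and $q\in\cQ$, if $q-v_ip\equiv x\pmod p$ for two distinct $p_1,p_2\in\cP$, then $p_l\mid q-x$ for $l=1,2$, hence $p_1p_2\mid q-x$; but $0<|q-x|\leq y\ll x\log x<(x/2)^2<p_1p_2$, a contradiction. Thus $\#\{p\in\cP:\,q-v_ip\notin\cE_p^*\}\leq 1$, which replaces the bound $k_m$ of Section 5, and the first- and second-moment computations there go through with identical error terms (the discarded congruence contributes only $O(x^{-2/3+o(1)})$-type terms, absorbed as before). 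One therefore recovers the analogue of (\ref{sigam-rsumsum}); setting $C:=\frac{u}{\sigma}\cdot\frac{x}{2y}\asymp c_0^{-1}$, made large by choosing $c_0$ small exactly as in Section 5, yields (III) with probability $1-o(1)$, completing the random construction and hence Proposition \ref{sievep2}.

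The main thing to watch is the bookkeeping rather than any single estimate: one must confirm that every appeal in Sections 3--5 to the admissibility or $O_m(1)$-boundedness of $\cH'$ survives its collapse to the lone class $x$, i.e. the facts $h(s)<s$ (now the trivial $1<s$), $|n_i-h'_j|\leq 2x^2$ (now $|n_i-x|\leq 2x^2$), $\#\{p:q-v_ip\notin\cE_p\}\leq k_m$ (now $\leq 1$), and $\sum_{n\in\cE_p}\P[\tilde\n_p=n]=1-O(x^{-2/3+o(1)})$. As indicated above each survives, in fact with the cleaner constant $1$ in place of $k_m$, so there is no genuine obstruction beyond threading the modified $\bar\cE_p^*$ through the argument consistently.
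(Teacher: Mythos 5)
Your proposal is correct and follows exactly the route the paper intends: the paper's own ``proof'' of Proposition \ref{sievep2} is simply the remark that the argument of Sections 3--5 goes through with $\bar\cE_p$ replaced by $\bar\cE_p^*$, and you have verified in detail precisely the points (the bound $h(s)<s$ becoming $1<s$, the correlation estimate of Lemma \ref{FGKMTcorrbound}, and the ``at most one bad prime'' count replacing $k_m$ in Lemma \ref{sigam-rsumsumP}) where the admissible tuple $\cH'$ actually entered. No gaps; your write-up is in fact more explicit than the paper's.
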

Let us show how to deduce Theorem \ref{lpg} from Proposition \ref{sievep2}.
Let $\vec{a}$ and $\vec{b}$ be as in Proposition \ref{sievep2}. Extend the tuple $(a_s\in \bar{\cE}_s^*)_{s\in \cS}$  to $(a_s\in \bar{\cE}_s^*)_{p\leq x}$ by setting $a_p:=b_p$ for $p\in \cP$ and $a_p:=0$ for those $p\not\in\cS\cup\cP$.
Let
$$
\cT:=\{ n\in(x,y]:n\not\equiv a_p\pmod{p}\text{ for all }p\leq x \}
$$
According to Proposition \ref{sievep2} and the discussions in Section 3, we also can get that $|\cT|\ll x/\log x$.
Choose  a sufficiently large constant $C_0$ such that $|\cT|$ is less than the number of primes in $(x,C_0x]$. 

It is not difficult to see that for any $n\in(x,y]$, $n-x$ has at most one prime factor lying in $(x,C_0x]$. So assuming that $\cT=\{t_1,\ldots,t_h\}$, we may choose distinct primes $p_1,\ldots,p_h\in(x,C_0x]$ such that $t_i-x\not\equiv 0\pmod{p_i}$ for each $1\leq i\leq h$. Let $a_{p_i}=t_i$ for $1\leq i\leq h$ and arbitrarily choose $a_p\in\cE_p^*$ for those $p\in(x,C_0x]\setminus\{p_1,\ldots,p_h\}$. Thus for each $n\in(x,y]$, there exists $p\leq C_0x$ such that $n\equiv a_p\pmod{p}$.

Now let $x=l^\beta$ where $\beta$ is an arbitrary sufficiently large integer. Let
$$
M:=\prod_{p\leq C_0x}p.
$$  
By the Chinese remainder theorem, there exists $k\in(C_0x-l,M+C_0x-l]$ such that
$$
k(a_p-x)\equiv -l\pmod{p}
$$
for each $p\leq C_0x$ with $p\nmid l$, and $k\equiv 1\pmod{p}$ if $p\mid l$. Clearly $$k+l>C_0x$$ and $$\log k\ll\log M\ll x$$ by the prime number theorem.
For any $1\leq n\leq y-x$, there exists $p\leq C_0x$ such that $n+x\equiv a_p\pmod{p}$. If $p\nmid l$, then
$$
kn+l\equiv k(a_p-x)+l\equiv 0\pmod{p}.
$$
Suppose that $p\mid l$. Recall that $a_p=0$ since $l<\log^{20} x$. Since $l$ divides $x$, we also have
$$
kn+l\equiv -kx+l\equiv 0\pmod{p}.
$$
So the elements of $\{kn+l:\,1\leq n\leq y-x\}$ are all composite. Thus we get the desired result by recalling that
$$
y-x\gg x\cdot\frac{\log x\log_3 x}{\log_2 x}.
$$
\qed

     \end{document}